\renewcommand\section{\@startsection {section}{1}{\z@}
{-30pt \@plus -1ex \@minus -.2ex}
{2.3ex \@plus.2ex}
{\normalfont\normalsize\bfseries}}
\renewcommand\subsection{\@startsection{subsection}{2}{\z@}
{-3.25ex\@plus -1ex \@minus -.2ex}
{1.5ex \@plus .2ex}
{\normalfont\normalsize\bfseries}}
\renewcommand{\@seccntformat}[1]{\csname the#1\endcsname. }
\newcommand{\CC}{\mathbb{C}}
\newcommand{\Conormal}{\operatorname{Con}}
\newcommand{\DL}{\operatorname{DL}}
\newtheorem{theorem}{Theorem}
\numberwithin{theorem}{section}
\newtheorem{proposition}[theorem]{Proposition}
\newtheorem{maintheorem}[theorem]{Main Theorem}
\newtheorem{lemma}[theorem]{Lemma}
\theoremstyle{definition}
\newtheorem{definition}[theorem]{Definition}
\newtheorem{remark}[theorem]{Remark}
\newtheorem{example}[theorem]{Example}
\subjclass[2020]{14Q99, 15A69, 58K05, 14M12, 13P99, 41A65}
\keywords{Tensor, Waring Decomposition, Singular Vector Tuples, Data Loci}
\begin{document}

\title{When does subtracting a rank-one approximation decrease tensor rank?}
\author{Emil Horobe\c{t} and Ettore Teixeira Turatti}

\date{} 

\maketitle
\begin{abstract}
Subtracting a critical rank-one approximation from a matrix always results in a matrix with a lower rank. This is not true for tensors in general. Motivated by this, we ask the question: what is the closure of the set of those tensors for which subtracting some of its critical rank-one approximation from it and repeating the process we will eventually get to zero? In this article, we show how to construct this variety of tensors and we show how this is connected to the bottleneck points of the variety of rank-one tensors (and in general to the singular locus of the hyperdeterminant), and how this variety can be equal to and in some cases be more than (weakly) orthogonally decomposable tensors. 
\end{abstract}

\section{Introduction}

Low-rank approximation of matrices is used for mathematical modeling and data compression, more precisely in principal component analysis, factor analysis, orthogonal regression, etc.
In order to get all critical rank-one approximations of a given matrix one can find all critical points of the distance function from the said matrix to the variety of rank-one matrices. By the Eckart-Young theorem, this is done by computing singular value decomposition. The number of such critical rank-one approximations (which is more generally equal to the \textit{Euclidean Distance Degree} of the variety of rank-one matrices \cite{DHOST16}) is always the minimum of the column and row dimensions of the matrix. 
Furthermore, by subtracting any such critical rank-one approximation from the matrix we get a drop in the rank, hence obtaining a suitable algorithm to construct any low-rank approximation of the matrix.

Low-rank approximations of tensors have even more application potential, but they are much more challenging both mathematically as well as computationally, tensor rank and many related problems are NP-hard, see \cite{Has90,HilLim13}. Despite this fact, many algorithms exist for finding rank-one approximations of a tensor. A way to do this, similarly to the matrix case, is by finding all critical points of the distance function from the said tensor to the variety of rank-one tensors (luckily this is an algebraically closed set). The generic number of such critical approximations was computed in \cite{FO} and shows that the degree of complexity of this problem for tensors is substantially higher than for matrices.

For higher-rank approximations, though, we have that tensors of bounded rank do not form a closed subset, so the best low-rank approximation of a tensor on the boundary does not exist (see \cite{DeSilvaLim}). From this, it results that subtracting a rank-one approximation from a tensor might even increase its rank (see \cite{SC10}). 

In this article, to resolve this obstacle for higher-rank approximations we turn our attention to the definition of \textit{border rank} of tensors (see definition for example in \cite{DraiKutt}) and we ask the question: what is the closure of the set of those tensors for which subtracting some of its critical rank-one approximation from it and repeating the process we will eventually get to zero?

We approach this problem by constructing the variety $\mathrm{DL}_2$ of tensors for which subtracting a critical rank-one approximation yields a rank-one tensor. Then we construct the variety $\mathrm{DL}_3$ of tensors for which subtracting a critical rank-one approximation yields an element of $\mathrm{DL}_2$, and so on. Our main finding is for the setting of symmetric tensors and it can be formulated as follows.

\begin{maintheorem}[Theorem~\ref{thm: stab} and Proposition~\ref{StepofStabilization}]

Let $X\subset \mathcal{S}^d\mathbb{C}^n$ be the cone over the Veronese variety and our inner product in $\mathcal{S}^d\mathbb{C}^n$ is the Bombieri-Weyl inner product. Then the chain $X=\DL_1\subset\dots\subset \DL_r\subset\dots$ stabilizes and the exact step of the stabilization is given by the formula in Proposition~\ref{StepofStabilization}.
\end{maintheorem}

Furthermore, for the general tensor case, we study in depth the variety $\mathrm{DL}_2$ of tensors for which subtracting one of its critical rank-one approximations we get a rank-one tensor. We will see that this variety is determined by the \textit{bottleneck points} of the variety of rank-one tensors and is related to the nodal singularities of the \textit{hyperdeterminant} (see Remark~\ref{NodalSing}). We also show the relation between  the $\DL_r$'s and weakly orthogonally decomposable tensors (see Definition~\ref{weaklyodeco}, Theorem~\ref{form_of_DL}, Proposition~\ref{PropforDL1} and Proposition~\ref{PropDLi}). Finally, in Section~\ref{Exa} we show examples of different behaviors of the limit variety $\DL_N$.

\section{Preliminaries}
Let us fix $n_1,n_2,\ldots,n_p\in \mathbb{N}$ and let $V=\mathbb{C}^{n_1}\otimes\ldots\otimes\mathbb{C}^{n_p}$. Now let us denote by $X\subseteq V$ the variety of rank at most one tensors of the form $x_1\otimes x_2\otimes\ldots\otimes x_p\in V$. This indeed is a variety and is always defined by the vanishing of all $2\times2$-subdeterminants of all flattenings of the tensor into matrices.
Such a flattening is obtained by partitioning the $p$ dimensions $n_1,\ldots, n_p$ into two
sets, for example $n_1,\ldots, n_q$ and $n_{q+1},\ldots, n_p$ and viewing the tensor as a $(n_1\cdot\ldots\cdot n_q)\times(n_{q+1}\cdot\ldots\cdot n_p)$-matrix.

Now we fix $\langle \cdot, \cdot\rangle_i$ inner products on the spaces $\mathbb{R}^{n_i}$, then it induces an inner product on $V_{\mathbb R}$ defined as follows.

\begin{definition}
The corresponding Bombieri-Weyl inner product on $V_{\mathbb R}=\mathbb{R}^{n_1}\otimes\ldots\otimes\mathbb{R}^{n_p}$ is defined for two rank-one tensors $x_1^1\otimes\ldots \otimes x_1^p, x_2^1\otimes\ldots \otimes x_2^p\in X$ by
\[\langle x_1^1\otimes\ldots \otimes x_1^p, x_2^1\otimes\ldots \otimes x_2^p\rangle_{BW}:=\prod_{i=1}^p\langle x_1^i,x_2^i\rangle_i\]
and it extends by linearity to $V_{\mathbb R}$.
Similarly for symmetric tensors, let $\langle\cdot,\cdot\rangle$ be an inner product in $\mathbb R^n$, the Bombieri-Weyl inner product corresponding to it in $\mathcal S^d\mathbb R^{n}$ is defined for two rank-one forms $u^d, v^d\in X$ by 
\[\langle u^d,v^d\rangle_{BW}:=\langle u,v\rangle ^d\]
and it extends by linearity to $\mathcal S^d\mathbb R^n$.
\end{definition}

Given an inner product $\langle \cdot,\cdot\rangle$ in $\mathbb R^n$, we will consider its extension as a bilinear form to the complex numbers. This means $\langle \cdot,\cdot\rangle$ is still a bilinear form, however, it is degenerated, i.e., there exists vectors $v\in \CC^n$ such that $\langle v,v\rangle=0$. Vectors with such property are called \emph{isotropic} vectors. For instance, if we consider the Euclidean metric in $\mathbb R^2$, then $v=(1,i)\in \CC^2$ is an example of an isotropic vector. 
Throughout the text, we will always consider the Bombieri-Weyl bilinear form in $\CC^{n_1}\otimes\dots\otimes\CC^{n_p}$ or $S^d\CC^n$ as the extension of a real inner product as in the previous paragraph.

Now take a tensor $T\in V$ and let $||\cdot ||$ be the norm corresponding to the chosen inner product on $V$, then we want to optimize the squared distance from $T$ to the variety $X$. So
\begin{equation}\label{MinProblem}
\begin{cases}
\text{minimize } d_T(x)=|| T-x||^2, \\
\text{subject to }x\in X.
\end{cases}
\end{equation}
The constrained critical points $x_1,\ldots,x_m$ of the function $d_T(x)$ are called the \textbf{critical rank-one approximations} of the tensor $T$. For a generic tensor $T$ the number $m$ of such constrained critical points is constant and is called the \textit{Euclidean Distance Degree} of the variety $X$ (for more details on this topic see \cite{DHOST16}) and it is given by a precise formula in \cite{FO}. 

Now we are interested to see what happens to the rank of the tensors $T-x_i$. It was shown in \cite{SC10}, that in the case of $n_1=n_2=n_3=2$, {so in $V=\mathbb{R}^2\otimes\mathbb{R}^2\otimes\mathbb{R}^2$} for a generic tensor $T$
(which has rank $2$) subtracting an $x_i$ results in a tensor $T-x_i$ that has
rank $3$. So the rank doesn't decrease but actually increases. 

Motivated by this phenomenon in continuation we are interested in the set of those tensors for which subtracting some of its critical rank-one approximation from it and repeating the process we will eventually get to zero.

\subsection{Joint ED correspondence and ED Duality}

Let us suppose that $X$ is minimally generated by $f_1,\ldots,f_s$ homogeneous polynomials. A classic approach to solving constrained optimization problems like \eqref{MinProblem} is to use Lagrange multipliers. Any constrained critical point of $d_T(x)$ is a solution to the following system, see \cite[Section 5.5.3]{ConvOpt},)
\begin{equation}\label{CritEqu}
\begin{cases}
\displaystyle\nabla d_T(x)+\sum_{i=1}^s \lambda_i \nabla f_i(x)=0\\
f_1(x)=f_2(x)=\ldots =f_s(x)=0.
\end{cases}
\end{equation}
Now, for regular points of $X$ we call the orthogonal complement (with respect to the chosen inner product) of the tangent space, $T_x X$, the normal space of $X$ at $x$, and we denote it by $N_xX$. For the above system, we will be interested only in solutions that are regular points (in this case only $0$ is a singular point) of $X$. We will denote the collection of regular points by $X_{reg}$ (and in general the singular points of a variety $X$ by $X_{sing}$).
Using this notation (by \cite{DHOST16}) the above system is equivalent to
\[
\begin{cases}
T-x \in N_x X,\\
x\in X_{reg}.
\end{cases}
\]
So a point $x\in X_{reg}$ is a solution to \eqref{CritEqu} if and only if there exists $y\in N_x X$, such that $x+y=T.$

We have seen so far that pairs of points $(x,y)$ such that $x\in X$ and $y\in N_xX$ play a crucial role in our analysis. The closure of the collection of all such pairs with $x\in X_{reg}$ is called the \textbf{conormal variety} and we denote it by $\Conormal(X)$. Formally we have
\begin{equation}\label{conormal}
\Conormal(X)=\overline{\{(x,y)\in V_x\times V_y\mid x\in X_{reg},\ y\in N_xX\}}.
\end{equation}
We use the notation $V_x\times V_y$ instead of just simply $V\times V$ to keep track that the first tuple of coordinates represents a point $x$ in $X\subseteq V$ and the second tuple of coordinates represents a point $y$ in $N_xX\subseteq V$.

There is a natural pair of projections $\pi_1:\Conormal(X)\to V_x$ to the first tuple of coordinates and $\pi_2:\Conormal(X)\to V_y$ to the second tuple of coordinates. The image of the first projection is the variety $X$ itself and the closure of the image of the second projection \[X^*:=\overline{\pi_2(\Conormal(X))}\] is called the \textbf{dual variety} of $X$ (see \cite[Section 5.4.2]{RS13}) or in other words the  \textbf{hyperdeterminant} of format $n_1\times\ldots\times n_p$ (see \cite{GKZ}).
This way we realize both $X$ and the dual $X^*$ in the same ambient space $V$.
\[\begin{tikzcd}
&\Conormal(X)\arrow{ld}[swap]{\pi_1}\arrow{rd}{\pi_2} \\
X\subseteq V& & X^*\subseteq V
\end{tikzcd}
\]

So we have that $x\in X_{reg}$ is a solution to \eqref{CritEqu} if and only if there exists a point $(x,y)$ in $\Conormal(X)$ such that $x+y=T$. To encapsulate this relationship we construct the so-called \textbf{joint ED correspondence} $\mathcal{E}_{X,X^*}$ (see \cite[Section $5$]{DHOST16}), to be the closure of all triples
\[
\{(x,y,T)\mid (x,y)\in \Conormal(X)\subseteq V, T\in V \text{ and } T=x+y\}.
\] So $\mathcal{E}_{X,X^*}$ is the closure of the graph of the Minkowski sum over $\Conormal(X)$. We have the following diagram of projections.

\[\begin{tikzcd}
& &\mathcal{E}_{X,X^*}\subseteq V_x \times V_y \times V_T \arrow{ld}[swap]{\pi_{12}}\arrow{rd}{\pi_3}& \\
&\Conormal(X)\subseteq V_x \times V_y\arrow{ld}[swap]{\pi_1}\arrow{rd}{\pi_2}& & V_T \\
X \subseteq V_x& & X^*\subseteq V_y& 
\end{tikzcd}
\]
Now we remind our reader of the duality property from \cite[Theorem $5.2$]{DHOST16} governed by the joint ED correspondence.
\begin{theorem}[ED Duality]\label{EDDuality}
Let $X\subseteq V$ be an irreducible affine cone, $X^*\subseteq V$ its dual variety, and $T\in V$ a general data point. The map $x\mapsto T-x$ gives a bijection from the critical points of $d_T$ on $X$ to the critical points of $d_T$ on $X^*$.
\end{theorem}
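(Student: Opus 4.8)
The plan is to deduce the statement from the \emph{biduality theorem} of projective algebraic geometry, which asserts that the (projective) conormal variety is symmetric: $\Conormal(\mathbb{P} X)=\Conormal((\mathbb{P} X)^{*})$ once one identifies the double dual projective space with the original one and exchanges the two factors; in particular $(X^{*})^{*}=X$. Using the fixed inner product to identify $V$ with its dual, this symmetry transports to the affine conormal variety $\Conormal(X)\subseteq V_x\times V_y$ considered here, so that a pair $(x,y)$ lies in $\Conormal(X)$ if and only if $(y,x)$ lies in $\Conormal(X^{*})$.

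First I would record the characterization already established in the excerpt: for $x\in X_{reg}$, the point $x$ is a critical point of $d_T$ on $X$ exactly when $(x,\,T-x)\in\Conormal(X)$, i.e.\ when $T-x\in N_x X$. Thus, setting $y:=T-x$, a critical point produces a triple $(x,y,T)\in\mathcal E_{X,X^{*}}$ with $x+y=T$. By biduality $(y,x)\in\Conormal(X^{*})$, and if in addition $y\in(X^{*})_{reg}$, then $x=T-y\in N_y X^{*}$, which says precisely that $y$ is a critical point of $d_T$ on $X^{*}$. Hence $x\mapsto T-x$ sends critical points of $d_T$ on $X$ to critical points of $d_T$ on $X^{*}$; running the same argument with the roles of $X$ and $X^{*}$ interchanged (legitimate since $(X^{*})^{*}=X$) shows that $y\mapsto T-y$ is a two-sided inverse, so the map is a bijection.

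What remains --- and this is the content of the word ``general'' --- is to exhibit a proper closed subset of $V_T$ outside of which the above argument applies to \emph{every} critical point simultaneously. Concretely one needs, for general $T$: every critical point $x$ of $d_T$ on $X$ lies in $X_{reg}$; the associated $y=T-x$ lies in $(X^{*})_{reg}$; both $x$ and $y$ are non-isotropic for the bilinear form (so that $N_x X$ and $N_y X^{*}$ behave as expected); and symmetrically for critical points on $X^{*}$. The clean way to organize this is through the joint ED correspondence: show that $\mathcal E_{X,X^{*}}$ is irreducible of dimension $\dim V$, so that $\pi_3\colon\mathcal E_{X,X^{*}}\to V_T$ is dominant with finite general fibers, and observe that the locus in $\mathcal E_{X,X^{*}}$ where $x\in X_{sing}$, or $y\in(X^{*})_{sing}$, or $x$ or $y$ is isotropic, is a proper closed subvariety. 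Its image under the generically finite $\pi_3$ is then a proper closed subset of $V_T$, which we discard.

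The main obstacle I expect is precisely this genericity bookkeeping: establishing irreducibility and the dimension count for $\mathcal E_{X,X^{*}}$ (equivalently, that $\Conormal(X)$ has dimension $\dim V-1$, which is classical), and checking that the various ``bad'' strata are proper in it --- including the subtlety that $\Conormal(X)$ is defined as a closure and could a priori acquire spurious components over $X_{sing}$. Once the correspondence is known to be irreducible and pure of the right dimension, the biduality input is purely formal and the asserted bijection, together with the corollary $\operatorname{EDdegree}(X)=\operatorname{EDdegree}(X^{*})$, follows at once.
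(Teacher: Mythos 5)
The paper does not prove this statement; it cites it directly from \cite[Theorem 5.2]{DHOST16} (``Now we remind our reader of the duality property from \ldots''), and the surrounding text merely sets up the joint ED correspondence so the theorem can be applied. So there is no ``paper's own proof'' to compare against here.

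That said, your sketch is a faithful reconstruction of the argument in the cited source and is essentially correct. You identify the right three ingredients: (i) biduality, which makes $\Conormal(X)$ and $\Conormal(X^{*})$ coincide up to swapping factors once $V$ and $V^{*}$ are identified via the inner product; (ii) the characterization of a critical point $x\in X_{reg}$ of $d_T$ by $T-x\in N_x X$, i.e.\ $(x,T-x)\in\Conormal(X)$, which immediately transports across the swap to show $T-x$ is critical for $d_T$ on $X^{*}$; and (iii) the genericity bookkeeping via the joint ED correspondence $\mathcal E_{X,X^{*}}$ --- showing it is irreducible of dimension $\dim V$, that $\pi_3$ is dominant and generically finite, and that the loci where $x\in X_{sing}$, $y\in (X^{*})_{sing}$, or either is isotropic form a proper closed subvariety whose image under $\pi_3$ can be excluded. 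You are also right to flag the closure in the definition of $\Conormal(X)$ as the delicate point; irreducibility of the conormal variety (it is a projective bundle over $X_{reg}$) is what rules out spurious components over $X_{sing}$. The involutive nature of $x\mapsto T-x$ together with the symmetric role of $X$ and $X^{*}$ then gives the bijection directly. In short: the plan is sound and matches the standard proof, but note that the task of completing it lies in \cite{DHOST16}, not in the present paper.
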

So this means that subtracting a critical rank-one approximation from $T$ results in a critical point of the distance function $d_T$ to the dual variety $X^*$. So in continuation, we will deal with tensors $T$ for which we will have additional requirements on the critical points of the distance function $d_T$ to the dual variety $X^*$.

\subsection{Special data locus on the dual variety $X^*$}

So we want to determine the set of all tensors $T\in V$, such that the optimization problem 

\begin{equation}\label{MinProblemDual}
\begin{cases}
\text{minimize } d_T(x)=||T-x||^2, \\
\text{subject to }x\in X^*.
\end{cases}
\end{equation}
on the dual variety $X^*$ has at least one critical point in a given subvariety of $X^*$ (namely $X$ to start with). 

By the ED duality \cite[Section $5$]{DHOST16} we have that the joint ED correspondence $\mathcal{E}_{X^*,X}$ for the distance optimization problem on the dual variety $X^*$ is the same as the joint ED correspondence $\mathcal{E}_{X,X^*}$ for the distance optimization problem on $X$, up to swapping the first to tuples $(x,y,T)\leftrightarrow(y,x,T)$ before taking the closures. This holds for the corresponding conormal varieties as well, so $\Conormal(X)$ is equal to $\Conormal(X^*)$ up to the swap $(x,y)\leftrightarrow(y,x)$, before taking the closures. So we have the following dual diagram of projections.

\[\begin{tikzcd}
& &\mathcal{E}_{X^*,X}\subseteq V_y \times V_x \times V_T \arrow{ld}[swap]{\pi_{12}}\arrow{rd}{\pi_3}& \\
&\Conormal(X^*)\subseteq V_y \times V_x\arrow{ld}[swap]{\pi_1}\arrow{rd}{\pi_2}& & V_T \\
X^* \subseteq V_y& & X\subseteq V_x& 
\end{tikzcd}
\]

For any subvariety $A\subseteq X^*$, following \cite{HR22}, we define the \textbf{data-locus of $A$} to be the closure of the projection $\pi_3$ into the space $V_T$ of tensors $T$, such that corresponding to such a tensor $T$ there is at least one pair of points $(y,x)\in\Conormal(X^*)$ in the conormal variety of $X^*$, such that $y\in A$. We will denote the data locus of $A$ by $\DL_A$. Formally we have that 
\begin{equation}\label{DefDLA}
\DL_A=\overline{\pi_{3}\left(\mathcal{E}_{X^*,X}\cap\left(A\times V_x\times V_T\right)\right)}.
\end{equation}
\begin{remark}
We choose not use the theorem in \cite[Theorem $5$]{HR22}, which describes the structure of a general $\DL_A$, because there we have the condition that $X^*_{reg}\cap A_{reg}\neq \emptyset$. For our discussion in general we can not have such an assumption on $A$.
\end{remark}

\section{The construction of the first layer of special tensor data locus $\DL_2$}
We continue by constructing the set of tensors $T$ for which subtracting a critical rank-one approximation we get a tensor that is at most rank-one. We denote the closure of this set by $\DL_2$.

So first we are interested in tensors $T$ with $x_1,\ldots,x_m$ critical rank-one approximations, such that $T-x_i\in X$ for some $x_i$. By the ED Duality (Theorem~\ref{EDDuality}) this is equivalent to searching for tensors $T$ with $T-x_1,\ldots,T-x_m$ critical points of the distance function $d_T$ from $T$ to the dual variety $X^* $, such that $T-x_i\in {X^\ast}$ for some $x_i$. So we want to determine the data locus of the subvariety $X\cap X^*=X\subseteq X^*$ of the distance optimization problem to $X^*$. So we have that \[\DL_2=\DL_{X\cap X^*}=\DL_X.\]
\begin{remark}\label{XsubvarX*}
Here we have that $X\subseteq X^*$, since any $x_1\otimes\ldots\otimes x_p$ rank-one tensor is perpendicular to the tangent space of $y=y_1\otimes\ldots\otimes y_p$, {namely $$T_{y}X=\langle \CC^{n_1}\otimes y_2\otimes\dots\otimes y_p,\dots,y_1\otimes \dots\otimes y_{p-1}\otimes\CC^{n_p}\rangle,$$}with some $x_i\perp y_i$ and $x_j\perp y_j$ (perpendicularity in at least two different indices) so $x_1\otimes\ldots\otimes x_p \in X^*$.
\end{remark}
Now we have the following proposition describing the structure of $\DL_2$
\begin{proposition}[Structure of $\DL_2$]\label{StructDL2}
Let $V=\mathbb{C}^{n_1}\otimes\ldots\otimes\mathbb{C}^{n_p}$ and let $X$ be the variety of rank-one tensors and $X^*$ its dual. Then \[\DL_2=\overline{\{x+y\mid x\in X_{reg}\text{ and }y\in N_xX\cap X.}\}\]
\end{proposition}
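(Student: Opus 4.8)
The plan is to turn the statement into a purely set-theoretic identity on the conormal variety and then to deal with the one subtle point, namely what happens over the singular point of the cone. Starting from the reduction $\DL_2=\DL_X=\overline{\pi_3(\mathcal{E}_{X^*,X}\cap(X\times V_x\times V_T))}$ recorded above, I would first observe that, since $\Conormal(X^*)$ is closed and the Minkowski sum is a morphism, $\mathcal{E}_{X^*,X}$ is honestly the graph $\{(y,x,x+y)\mid(y,x)\in\Conormal(X^*)\}$, so the closure in the definition of the ED correspondence is vacuous. Using the coordinate swap identifying $\Conormal(X^*)$ with $\Conormal(X)$, intersecting with $X\times V_x\times V_T$ just imposes $y\in X$ on the pair $(x,y)$, and applying $\pi_3$ gives
\[
\DL_2=\overline{\{\,x+y\mid (x,y)\in\Conormal(X),\ y\in X\,\}}.
\]
So the proposition reduces to identifying this closure with $\overline{\{x+y\mid x\in X_{reg},\ y\in N_xX\cap X\}}$.

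The inclusion $\supseteq$ is immediate: for $x\in X_{reg}$ and $y\in N_xX\cap X$ one has $(x,y)\in\Conormal(X)$ by definition and $y\in X$, so $x+y$ lies in the set above, and taking closures gives the inclusion. For $\subseteq$ I would split $\Conormal(X)\cap(V_x\times X)$ according to whether the first coordinate is a regular point of $X$. Over $X_{reg}$ the conormal variety coincides with the conormal bundle (whose total space is already closed in the open subset $X_{reg}\times V$ of $X\times V$), so the part with $x\in X_{reg}$ is exactly $\{(x,y)\mid x\in X_{reg},\ y\in N_xX\cap X\}$, which maps under $(x,y)\mapsto x+y$ into the target set verbatim. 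The remaining pairs have first coordinate in $X_{sing}$, which for the affine cone $X$ is just the origin; such a pair is $(0,y)$ with $y\in X$ and maps to $y\in X$. Since $X=\overline{X_{reg}}$ and the choice $y=0$ exhibits every point of $X_{reg}$ as such an $x+y$, we get $X\subseteq\overline{\{x+y\mid x\in X_{reg},\ y\in N_xX\cap X\}}$. Hence the whole projection of $\Conormal(X)\cap(V_x\times X)$ lands in that closure, and combining the two inclusions proves the proposition.

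The only genuine obstacle is the closure bookkeeping. One has to be sure that restricting $\Conormal(X)$ to the regular locus does not create extra points in the fibres — this is precisely why one records that the conormal bundle is already closed over $X_{reg}$ — and one has to check that components of $\Conormal(X)\cap(V_x\times X)$ that might sit entirely over $0$ do not enlarge the answer; they do not, since they contribute only points of $X=\DL_1\subseteq\DL_2$, which already lie in the closure of the regular part. The rest is formal manipulation of the definitions of $\Conormal(X)$ and $\mathcal{E}_{X^*,X}$.
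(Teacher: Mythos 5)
Your proof is correct and takes essentially the same route as the paper: rewrite $\DL_2$ via the joint ED correspondence and the coordinate swap of $\Conormal(X^*)$ with $\Conormal(X)$, then identify the resulting set with the Minkowski sum over $\Conormal(X)$ restricted to second coordinate in $X$. You are in fact slightly more careful than the paper at one point: the paper passes from ``$(x,y)\in\Conormal(X)$'' to ``$y\in N_xX$'' via a limit argument without separately handling the possibility that the limit $x$ lands in $X_{\mathrm{sing}}=\{0\}$, whereas you explicitly split off the fibre over the vertex and observe that it only contributes $X=\DL_1\subseteq\DL_2$, so it cannot enlarge the closure.
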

\begin{proof}
By definition~\ref{DefDLA} of the data locus we have that \[\DL_2=\overline{\pi_{3}\left(\mathcal{E}_{X^*,X}\cap\left((X\cap X^*)\times V_x\times V_T\right)\right)}.\]
So $\DL_2$ is the closure of \[\{y+x\mid (y,x,y+x)\in \mathcal{E}_{X^*,X},\ y\in X\cap X^*\}.\]

For any triple $(y,x,y+x)\in \mathcal{E}_{X^*,X}$, we have that $(y,x)\in \Conormal(X^*)$. Now there exists a sequence $(y_i,x_i)\to (y,x)$, with $y_i\in X^*_{reg}$ and $x_i\in N_{y_i}X^*$. We have that generically $x_i$ is a regular point of $X$ (here actually $X_{sing}=\{0\}$) so by applying the swap from the ED duality (Theorem~\ref{EDDuality}) we get that $y_i\in N_{x_i} X$, that is $(x_i,y_i)\in\Conormal(X)$. But now since $\Conormal(X)$ is closed, by taking limits we get that $(x,y)\in\Conormal(X)$, hence $y\in N_xX$. 
So we have that 
\[\DL_2\subseteq\overline{\{x+y\mid x\in X_{reg}\text{ and }y\in N_xX\cap X\}}.\]
For the other inclusion, take any pair $(x,y)$, such that $x\in X_{reg}\text{ and }y\in N_xX\cap X$. Then it is clear that $x$ is a critical point of $x+y$, because $y\in N_{x}X$ and subtracting $x$ from $x+y$ we get $y$ which is an element of $X$. Hence $x+y\in \DL_2$.
\end{proof}
\begin{remark}\label{NodalSing}
 We have that if for a pair $x,y\in X$, with $y\in N_xX$ in addition we require that $x$ and $y$ are projectively distinct points and also that $x\in N_y X$, then the given pair $(x,y)\in \mathrm{BN}(X)$, where by $\mathrm{BN(X)}$ we denote the \textbf{projective bottleneck pairs} of $X$ (see \cite[Lemma $2.4$]{DiRocco}). So we have that
\[\{x+y\mid (x,y)\in \mathrm{BN}(X)\}\subseteq \DL_2.\]
Moreover, we have that the projective bottleneck points are corresponding to the nodal singularities of $X^*$ (see \cite[Theorem $8.14$]{Tev} or \cite[Theorem $0.3$]{WZ}). We will see in the upcoming examples, that indeed the structure of the singular locus of $X^*$ plays an important role in our discussion.
\end{remark}
We will continue with presenting a set of properties of $\DL_2$. We start with the dimension of the data locus. First, we want to understand how the dimension of $\DL_2$ relates to the dimension of $X$. For this, we have to distinguish two separate cases. First if $\dim(X\cap X^*_{sing})<\dim (X)$ or if $\dim(X\cap X^*_{sing})=\dim (X)$. In the second case, we can not say anything about the dimension.

For the first case though we get the following general description for the dimension of data loci.
\begin{proposition}[Dimension of $\DL_A$ non-singular case]\label{DimofDL}
Let $X\subseteq \mathbb{C}^n$ be a cone and $X^*$ its dual. Furthermore let $A\subset X$ be a proper subvariety s.t. $\dim(A\cap X_{sing})<\dim A$. Then we get that \[\dim \DL_A=\dim A+\mathrm{codim} X.\] 
\end{proposition}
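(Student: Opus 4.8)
The plan is to identify $\DL_A$ with the image of a projection from the joint ED correspondence and then compute dimensions by tracking the generic fiber of each map in the defining diagram. First I would recall from the dual diagram that $\DL_A = \overline{\pi_3\big(\mathcal{E}_{X^*,X}\cap(A\times V_x\times V_T)\big)}$, and that over the locus where $y\in X^*_{reg}$ the fiber of $\pi_{12}\colon \mathcal{E}_{X^*,X}\to\Conormal(X^*)$ is a single point (the sum $y+x$). So the first step is to build an auxiliary incidence variety, call it $\Conormal_A(X^*) := \overline{\{(y,x)\mid y\in A\cap X^*_{reg},\ x\in N_yX^*\}}$, which is the restriction of the conormal variety of $X^*$ over $A$; then $\DL_A$ is (the closure of) the image of the addition map $(y,x)\mapsto y+x$ restricted to this incidence variety.

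The second step is to compute $\dim\Conormal_A(X^*)$. By the swap property, $\Conormal(X^*)=\Conormal(X)$ after exchanging the two factors, and the conormal variety of any variety in $\mathbb{C}^n$ has dimension exactly $n$ (it is the projectivized conormal bundle, of dimension $n-1$ projectively). Restricting the first projection $\pi_2\colon\Conormal(X)\to X$ (which here sends $(x,y)\mapsto x$, i.e. lands on the \emph{second} factor of $\Conormal(X^*)$, the copy of $X$) to the preimage of $A$: over a general point $x\in A$ the fiber is the normal space direction at a general $y\in X^*$ conormal to it, and since the generic fiber of $\pi_2\colon\Conormal(X)\to X$ has dimension $n-\dim X = \operatorname{codim} X$, the hypothesis $\dim(A\cap X_{sing})<\dim A$ guarantees that a general point of $A$ is a general point of $X$ for this purpose, so $\dim\Conormal_A(X^*) = \dim A + \operatorname{codim} X$.

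The third step is to show the addition map $(y,x)\mapsto y+x$ on $\Conormal_A(X^*)$ is generically finite (indeed generically injective), so that $\dim\DL_A = \dim\Conormal_A(X^*)$. This is where ED Duality (Theorem~\ref{EDDuality}) does the work: for a general $T$ in the image, $T$ has only finitely many critical points of $d_T$ on $X^*$, hence only finitely many decompositions $T=y+x$ with $y\in X^*_{reg}$, $x\in N_yX^*$; imposing $y\in A$ can only cut this down, so the fiber of the addition map over a general point of $\DL_A$ is finite. Combining the three steps gives $\dim\DL_A = \dim A+\operatorname{codim} X$.

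The main obstacle I anticipate is the second step: one must be sure that a \emph{general} point of $A$ is genuinely general \emph{as a point of $X$ in the conormal variety}, i.e. that the conormal fiber over it has the expected dimension $\operatorname{codim} X$ and is not trapped inside some lower-dimensional special behavior. The hypothesis $\dim(A\cap X_{sing})<\dim A$ is exactly what rules out $A$ being swallowed by the singular locus (where the conormal fiber can jump), but one still needs an upper-semicontinuity argument for the fiber dimension of $\pi_2|_{\Conormal(X)}$ together with the irreducibility of $\Conormal(X)$ to conclude that the generic fiber dimension over $A$ equals the generic fiber dimension over all of $X$. A secondary, more routine point is checking that the addition map does not drop dimension — i.e. that $\Conormal_A(X^*)$ is not contained in a fiber of $(y,x)\mapsto y+x$ — but this follows immediately once generic finiteness in step three is established.
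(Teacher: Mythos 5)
Your approach is essentially the paper's: restrict the conormal variety to (the preimage of) $A$, compute its dimension as $\dim A + \mathrm{codim}\, X$ using the hypothesis, and use ED Duality / finiteness of critical points to conclude the Minkowski-sum map is generically finite, so $\DL_A$ has the same dimension. The obstacle you anticipate in step two is actually a non-issue that the paper handles cleanly: for $x\in X_{reg}$ the fiber of $\pi_1\colon \Conormal(X)\to X$ is \emph{exactly} $\{x\}\times N_xX$, an affine space of dimension $\mathrm{codim}\,X$, so over $A\setminus X_{sing}$ the restricted projection is an affine vector bundle of that rank and no upper-semicontinuity or irreducibility argument is needed --- only the hypothesis $\dim(A\cap X_{sing})<\dim A$, which guarantees $A\not\subseteq X_{sing}$.
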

\begin{proof}

In this case, we can use the structure theorem of $\DL_A$ according to \cite[Theorem $5$]{HR22} and we get that $\DL_A$ is the closure of the image under the Minkowski sum of
	\[\Conormal(A)\cap\Conormal(X)=\{(x,y)\mid x\in A\setminus X_{sing},\ y\in N_x X\}.\]
Just like in the proof of \cite[Theorem $4.1$]{DHOST16} this means that $\pi_1:\Conormal(A)\cap\Conormal(X)\to A$ is an affine vector bundle of rank equal to the codimension of $X$, since the fiber over an $x\in A\setminus X_{sing}$ is equal to $\{x\}\times N_xX$, where the second factor is an affine space of dimension equal to the codimension of $X$, varying smoothly with $x$. Because we have that $A\cap X_{sing}$ is of codimension higher than zero in $A$, we get that the dimension of $\Conormal(A)\cap\Conormal(X)$ is equal to $\dim A+\mathrm{codim} X$. Now the Minkowski sum map $\Sigma:\Conormal(A)\cap\Conormal(X)\to \DL_A$, with $(x,y)\mapsto x+y$, cannot have positive dimensional fibers over a general point $T\in\DL_A$ because, for $(x,y)\in \Sigma^{-1}(T)$, the corresponding $x$ is a critical point of the distance function from $T$ to $X$, hence the number of such $x$'s is finite and equal to the ED degree of $X$. 
So in this case the dimension of $\DL_A$ equals the dimension of $\Conormal(A)\cap\Conormal(X)$, hence the claim.
\end{proof}

\begin{remark}
In particular for $\DL_2$, by applying the above proposition to $X^*$ the dual variety to $X \subseteq \mathbb{C}^{n_1}\otimes\ldots\otimes\mathbb{C}^{n_p}$ the cone over rank-one tensors  and setting $A=X$ 
 ( by Remark~\ref{XsubvarX*}) we get that if we have that $\dim(X\cap X^*_{sing})<\dim (X)$, then \[\dim \DL_2=\dim (X)+\mathrm{codim}\ X^*.\]
\end{remark}

Now we have the following set of properties of $\DL_2$, which are about the relation of $\DL_2$ to the variety of rank-one tensors, to border rank-two tensors, and to (weakly) orthogonally decomposable tensors. In order to continue, based on the classical definition of orthogonally decomposable tensors (see \cite{AnnaElina}) and generalizing the definition of weakly orthogonally decomposable tensors in the symmetric setting (see \cite{BiDraSe}), we give the following definition.
\begin{definition}\label{weaklyodeco}
A tensor $T\in V=\mathbb{C}^{n_1}\otimes\ldots\otimes\mathbb{C}^{n_p}$ is \textbf{weakly orthogonally decomposable} if $T$ can be written as
\[T=\sum_{i=1}^m x_1^i\otimes\ldots\otimes x_p^i,\] with $x_i^j$ are nonzero and $x_j^k\perp x_j^l$, with $k\neq l$, for all $1\leq j\leq p$. If moreover, we get that $x_j^l\notperp x_j^l$, so no $x_j^l$ is isotropic, then $T$ is called strongly orthogonally decomposable. Moreover, if a real strongly orthogonally decomposable tensor has such a  decomposition with real terms, then it is classically orthogonally decomposable.
\end{definition}

\begin{proposition}\label{PropforDL1}
Let $V=\mathbb{C}^{n_1}\otimes\ldots\otimes\mathbb{C}^{n_p}$ and let $X$ be the cone over the variety of rank-one tensors and $X^*$ its dual. Then we have that 
\begin{itemize}
	\item[a. ] $X\subseteq \DL_2$ and as a consequence (for the sake of consistency in notation) we will denote $\DL_1:=X$;
	\item[b. ] Any $T\in \DL_2$ has border rank less than or equal to $2$;
	\item[c. ] $\DL_2$ contains all rank at most $2$ weakly orthogonally decomposable tensors.
\end{itemize}
\end{proposition}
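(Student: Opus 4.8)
The plan is to dispatch the three items in order, using the explicit description $\DL_2=\overline{\{x+y\mid x\in X_{reg},\ y\in N_xX\cap X\}}$ from Proposition~\ref{StructDL2} together with Remark~\ref{XsubvarX*} ($X\subseteq X^*$) and the concrete form of the tangent space $T_yX$ recorded there.

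For item (a), I would show $X\subseteq\DL_2$ by exhibiting, for a generic rank-one tensor $x=x_1\otimes\dots\otimes x_p$, a nonzero $y\in N_xX\cap X$ and then letting $y\to 0$; since $\DL_2$ is closed and $x+y\to x$, this gives $x\in\DL_2$. Concretely, pick $v_i\perp x_i$ for every $i$ (possible as long as $n_i\ge 2$; the degenerate cases with some $n_i=1$ reduce to a smaller product and can be handled separately or are vacuous), set $y=v_1\otimes\dots\otimes v_p$; this is rank-one, and by the form of $T_xX$ displayed in Remark~\ref{XsubvarX*} together with the multiplicativity of the Bombieri--Weyl form, $\langle y,\,x_1\otimes\dots\otimes \CC^{n_j}\otimes\dots\otimes x_p\rangle_{BW}=\prod_{i\neq j}\langle v_i,x_i\rangle\cdot\langle v_j,\CC^{n_j}\rangle=0$ because each factor $\langle v_i,x_i\rangle$ with $i\neq j$ already vanishes. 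Hence $y\in N_xX$, and scaling $y$ by $t\to 0$ keeps us in $\Conormal(X)$ and sends $x+ty\to x$. The renaming $\DL_1:=X$ is then just a definitional convenience.

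For item (b), every $T\in\DL_2$ is a limit of tensors of the form $x+y$ with $x,y$ rank-one, i.e.\ a limit of rank-$\le 2$ tensors; by definition the border rank is lower semicontinuous (the border-rank-$\le 2$ locus is closed), so $T$ has border rank $\le 2$. For item (c), I would take a weakly orthogonally decomposable $T=x_1^1\otimes\dots\otimes x_p^1+x_1^2\otimes\dots\otimes x_p^2$ with $x_j^1\perp x_j^2$ for all $j$, set $x:=x_1^1\otimes\dots\otimes x_p^1\in X_{reg}$ and $y:=x_1^2\otimes\dots\otimes x_p^2\in X$, and verify $y\in N_xX$. Again using the displayed form of $T_xX$ and multiplicativity of $\langle\cdot,\cdot\rangle_{BW}$, each generator $x_1^1\otimes\dots\otimes\CC^{n_j}\otimes\dots\otimes x_p^1$ pairs with $y$ to give $\prod_{i\neq j}\langle x_i^2,x_i^1\rangle\cdot\langle x_j^2,\CC^{n_j}\rangle$, which vanishes since for $p\ge 2$ there is at least one index $i\neq j$ and $\langle x_i^2,x_i^1\rangle=0$ by the orthogonality hypothesis. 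Thus $(x,y)\in\Conormal(X)$ with $y\in N_xX\cap X$, so $T=x+y\in\DL_2$ by Proposition~\ref{StructDL2}.

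The only delicate point — the "main obstacle" — is the regularity requirement $x\in X_{reg}$ in Proposition~\ref{StructDL2}: in (a) and (c) one must make sure the chosen first summand is a nonzero rank-one tensor (so it lies in $X_{reg}=X\setminus\{0\}$), which forces a short discussion of degenerate sub-cases, namely when a factor $x_i$ is forced to be $0$ or when some $n_i=1$. I would handle this by noting that if some $x_j^1=0$ the tensor $T$ is itself rank $\le 1$ and already in $X\subseteq\DL_2$ by (a), and if $n_i=1$ for some $i$ the product collapses to a tensor space with fewer factors where the same argument applies verbatim; in the symmetric reformulation the analogous caveat is that $u,v$ be nonzero with $u^d\neq 0$. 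Everything else is a direct computation with the Bombieri--Weyl form and the closedness of $\DL_2$.
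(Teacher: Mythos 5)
Your proposal is correct. Parts (b) and (c) follow the paper's argument essentially verbatim: (b) is the same limit/border-rank observation, and (c) is the same Bombieri--Weyl pairing computation against $T_xX$ using the orthogonality $x_j^1\perp x_j^2$ in all slots $j$. The only genuine divergence is in part (a). The paper proves $X\subseteq\DL_2$ by an abstract route: it observes $\{0\}\in X$ so $\DL_{\{0\}}\subseteq\DL_2$, then invokes biduality $(X^*)^*=X$ together with \cite[Theorem~1]{H17} to conclude $X\subseteq\DL_{\{0\}}$. You instead produce, for generic $x\in X_{reg}$, an explicit rank-one $y\in N_xX\cap X$ and pass to the limit $t y\to 0$. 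Your route is valid and in fact closer to the more elementary argument the paper itself later uses in Proposition~\ref{PropDLi}(a), where it simply writes $x=x+0$ with $0\in N_xX\cap X$. Two small remarks: first, your construction of a nonzero $y$ and subsequent limit is an unnecessary detour -- since $0\in N_xX\cap X$ always, you may take $y=0$ directly and avoid both the genericity discussion and the $n_i=1$ caveat entirely; second, for the conclusion that $y=v_1\otimes\dots\otimes v_p$ lies in $N_xX$ you only need orthogonality $v_i\perp x_i$ in two slots, not all $p$, though requiring it in all slots is harmless. What the paper's biduality argument buys, by contrast, is a statement that holds for an arbitrary affine cone $X$ without any reference to the explicit tangent space of the Segre variety.
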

\begin{proof}
\begin{itemize}
\item[a. ] First we observe that $\{0\}\in X$, so $\DL_{\{0\}}\subseteq \DL_2$, now by applying \cite[Theorem $1$]{H17} and following biduality we have that $X=(X^*)^*\subseteq \DL_{\{0\}}\subseteq \DL_2$. Hence the inclusion. 
	(We remark that the $X\subseteq  \DL_2$ inclusion also follows from  \cite[Corollary $3.2$]{HR22}.)
	  
	\item[b.] For any $T\in \DL_2$, there exists a sequence of $T_i\to T$, such that for each $T_i$ there exists an $x_i\in X$ with ${T_i}-x_i\in X$ and this way $T_i=x_i+(T-x_i)$, so $T_i$ is of rank at most $2$. But by definition, this exactly means that $T$ is of border rank at most $2$ (is the limit of rank at most $2$ tensors). 
	
	\item[c.] By construction, all rank at most two weakly orthogonally decomposable tensors $T$ are of the form $x_1^1\otimes\ldots\otimes x_p^1+x_1^2\otimes\ldots\otimes x_p^2$, with $x_i^1\perp x_i^2$. It is classical, that the tangent space to $X$ at the point $x_1^1\otimes\ldots\otimes x_p^1$ is equal to $\sum_{i=1}^p x_1^1\otimes\ldots\otimes \CC^{n_i}\otimes \ldots\otimes x_p^1$. Now by \cite[section 2.2]{DOT18} we have that for any $i$ and for any $y\in\CC^{n_i}$ we get 
 \[\langle x_1^1\otimes\ldots\otimes y\otimes \ldots\otimes x_p^1, x_1^2\otimes\ldots\otimes x_p^2\rangle_{BW}=\prod_{j\neq i}\langle x_j^1,x_j^2\rangle_j \langle {y,x_i^2}\rangle_i=0.\] This means that $x_1^2\otimes\ldots\otimes x_p^2\perp T_{x_1^1\otimes\ldots\otimes x_p^1} X$, so {$x_1^1\otimes\ldots\otimes x_p^1$} is a critical point of $x_1^1\otimes\ldots\otimes x_p^1+x_1^2\otimes\ldots\otimes x_p^2$.
\end{itemize}
\end{proof}
\begin{proposition}
    We have that $\DL_2$ always has ${p\choose 2}$ irreducible components and their intersection contains all rank at most two weakly orthogonally decomposable tensors.
\end{proposition}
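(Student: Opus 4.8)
The plan is to describe $\DL_2$ explicitly as a union of $\binom{p}{2}$ irreducible pieces $Z_S$, one for each two-element subset $S\subseteq\{1,\dots,p\}$, to verify that each $Z_S$ is irreducible of one and the same dimension, and then to prove that no $Z_S$ is contained in another; the assertion about weakly orthogonally decomposable tensors will follow at once. Throughout we assume $n_i\ge 2$ for every $i$ (a factor $\CC$ being vacuous and only lowering the count).

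First I would sharpen the normal space computation already used in Remark~\ref{XsubvarX*}: for nonzero rank-one tensors $x=x_1\otimes\cdots\otimes x_p$ and $y=y_1\otimes\cdots\otimes y_p$ and for $w$ placed in the $i$-th factor, $\langle y,\ x_1\otimes\cdots\otimes w\otimes\cdots\otimes x_p\rangle_{BW}=\langle y_i,w\rangle_i\prod_{j\ne i}\langle y_j,x_j\rangle_j$, so $y\in N_xX$ exactly when for each $i$ there is some $j\ne i$ with $x_j\perp y_j$, that is, when $x_j\perp y_j$ for at least two indices $j$. Together with Proposition~\ref{StructDL2} this gives
\[\DL_2=\bigcup_{|S|=2}Z_S,\qquad Z_S:=\overline{\{\,x_1\otimes\cdots\otimes x_p+y_1\otimes\cdots\otimes y_p\ :\ x_j\perp y_j\ \text{for all }j\in S\,\}},\]
and letting $y\to 0$ shows $X\subseteq Z_S$ for every $S$. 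Each $Z_S$ is the closure of the image of $\Phi\colon\prod_{i=1}^p(\CC^{n_i}\times\CC^{n_i})\to V$, $((x_i),(y_i))\mapsto x_1\otimes\cdots\otimes x_p+y_1\otimes\cdots\otimes y_p$, restricted to $U_S=\{((x_i),(y_i))\ :\ \langle x_j,y_j\rangle_j=0\ \text{for }j\in S\}$; since $U_S$ is a product of affine spaces and of nondegenerate quadrics (irreducible because $n_j\ge 2$), it is irreducible, hence so is $Z_S$. A general point of $Z_S$ is, for $p\ge 3$, an identifiable rank-two tensor (its two rank-one summands having linearly independent first factors, so that a general tensor of rank two has a unique unordered decomposition into rank-one tensors), and hence the general fibre of $\Phi|_{U_S}$ consists of the two $2(p-1)$-dimensional torus orbits through a decomposition and its swap, both contained in $U_S$; therefore $\dim Z_S=\dim U_S-2(p-1)=2\sum_i n_i-2p$, the same value for every $S$.

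The crux is that the $Z_S$ are pairwise incomparable, so that they are precisely the irreducible components of $\DL_2$ and there are $\binom{p}{2}$ of them. Being irreducible of equal dimension, it suffices to show $Z_S\ne Z_{S'}$ for $S\ne S'$; fix $j_0\in S'\setminus S$. The image $\Phi(U_{S\cup\{j_0\}})$ is contained in $Z_S$ and, by the same fibre count, has dimension $\dim Z_S-1$, so a general point $T$ of $Z_S$ lies in $\Phi(U_S)\setminus\overline{\Phi(U_{S\cup\{j_0\}})}$: it can be written $T=x_1\otimes\cdots\otimes x_p+y_1\otimes\cdots\otimes y_p$ with $x_j\perp y_j$ for $j\in S$ but $x_{j_0}\not\perp y_{j_0}$, and, the parameters being general, $T$ is identifiable with this as its only unordered rank-two decomposition. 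If $T$ were also a general point of $Z_{S'}$, identifiability would force $x_{j_0}\perp y_{j_0}$, a contradiction. Hence the $\binom{p}{2}$ varieties $Z_S$, $|S|=2$, are exactly the irreducible components of $\DL_2$.

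Finally, a weakly orthogonally decomposable tensor $T$ of rank at most two is either of rank $\le 1$, whence $T\in X\subseteq\bigcap_{|S|=2}Z_S$, or equals $x_1^1\otimes\cdots\otimes x_p^1+x_1^2\otimes\cdots\otimes x_p^2$ with $x_j^1\perp x_j^2$ for all $j$, so $((x_i^1),(x_i^2))\in U_S$ for every $S$ and hence $T\in\bigcap_{|S|=2}Z_S$. I expect the incomparability of the components to be the main obstacle: the normal space description, the irreducibility via $\Phi$, and the statement about weakly orthogonally decomposable tensors are routine consequences of the earlier results, while ruling out that two distinct index pairs yield the same component genuinely relies on the identifiability of a general rank-two tensor (valid because $p\ge 3$; when $p=2$ there is a single component and nothing to check) together with the careful count of the dimension of the general fibre of $\Phi$.
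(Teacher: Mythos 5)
Your proof follows the same basic strategy as the paper --- you derive from the normal--space condition that a rank-one summand is orthogonal to the other in at least two indices, and decompose $\DL_2$ accordingly into $\binom{p}{2}$ pieces $Z_S$ indexed by two-element subsets $S\subseteq\{1,\dots,p\}$. The paper stops there: it simply asserts that these pieces are the irreducible components, and does not address the second claim about weakly orthogonally decomposable tensors at all. Your write-up is considerably more careful and actually supplies what the paper leaves unproved: you realize $Z_S$ as the closure of the image of a map from a product of affine spaces with an irreducible quadric per index in $S$ (requiring $n_i\geq 2$, which you correctly flag), so $Z_S$ is irreducible; you compute $\dim Z_S = 2\sum_i n_i - 2p$ via the size of the generic fibre; and you invoke identifiability of a general rank-two tensor for $p\geq 3$ to show that a generic element of $Z_S$ cannot also lie on $Z_{S'}$, so the $Z_S$ really are pairwise incomparable and hence are precisely the components. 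The concluding observation that an odeco rank-$\leq 2$ tensor lies in every $U_S$ and hence in $\bigcap_S Z_S$ is also needed and is missing from the paper's proof. One small wrinkle: your parenthetical justification of identifiability (``its two rank-one summands having linearly independent first factors, so that a general tensor of rank two has a unique unordered decomposition'') reads as if linear independence in one factor sufficed; what is actually being used is the standard Kruskal-type fact that a rank-two tensor with $p\geq 3$ factors and linearly independent vectors in (at least three of) its factors has a unique decomposition, and the genericity of points of $U_S$ guarantees that linear independence. With that clarification the argument is complete, and it is both more rigorous and more informative than the proof in the paper.
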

\begin{proof}
    Let $x_1=x_1^1\otimes\dots\otimes x_1^p, x_2=x_2^1\otimes\dots\otimes x_2^p\in V=\CC^{n_1}\otimes\dots\otimes\CC^{n_p}$ be rank-one tensors. Suppose $x_1+x_2\in \DL_2$ is a generic element, then $x_1\perp T_{x_2}X=\sum_{i=1}^p x_2^1\otimes \ldots\otimes \CC^{n_i}\otimes\ldots\otimes x_2^p$. For all $1\leq j\leq p$ this leads to the conditions 
    \[\prod_{i\neq j}\langle x_1^i,x_2^i\rangle_i=0.\]
All of these conditions are satisfied simultaneously when there are at least two indices $j_1, j_2$ such that $\langle x_1^{j_1}, x_2^{j_1}\rangle_{j_1}=\langle x_1^{j_2}, x_2^{j_2}\rangle_{j_2}=0$. $\DL_2$ is given by the union over all pairs of indices $j_1, j_2$ of the spaces defined by the previous conditions, and there are  $p\choose 2$ pairs.
\end{proof}
\section{The sequence of higher border rank special tensor data loci}
Now to continue our pursuit we construct the second layer of data locus, the $\DL_3$, the variety of those tensors for which subtracting a critical rank-one approximation we get a tensor on $\DL_2$. By the ED Duality (Theorem~\ref{EDDuality}) this is equivalent to searching for tensors $T$ with $T-x_1,\ldots,T-x_m$ critical points of the distance function $d_T$ from $T$ to the dual variety $X^* $, such that $T-x_i\in \DL_2$ for some $x_i$. So we want to determine the special data locus of the subvariety $\DL_2\cap X^*\subseteq X^*$ of the distance optimization problem to $X^*$. This means that we have that $\DL_3=\DL_{X^*\cap\DL_2}$. 
We continue with the construction of $\DL_3,\DL_4,\ldots$ so we define in general
\[\DL_r=\DL_{X^*\cap\DL_{r-1}}.\]  
We have the following proposition describing the general structure of the $\DL_r$. The proof of this statement goes in a similar fashion to the proof of Proposition~\ref{StructDL2}.
\begin{proposition}[Structure of $\DL_r$]\label{StructDLi}
Let $V=\mathbb{C}^{n_1}\otimes\ldots\otimes\mathbb{C}^{n_p}$ and let $X$ be the cone over the variety of rank-one tensors and $X^*$ its dual. Then \begin{equation*}\label{eq:DLr}
\DL_r=\overline{\{x+y\mid\text{such that }x\in X_{reg}\text{ and }y\in N_xX\cap \DL_{r-1}.}\}\end{equation*}
\end{proposition}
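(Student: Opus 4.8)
The plan is to repeat the argument of Proposition~\ref{StructDL2} with the subvariety $X\cap X^*=X$ replaced throughout by $A:=X^*\cap\DL_{r-1}$. By the definition~\eqref{DefDLA} of the data locus, $\DL_r=\DL_A$ is the closure of the set of sums $x+y$ over triples $(y,x,y+x)\in\mathcal{E}_{X^*,X}$ with $y\in A$, so it suffices to compare this set with $\{x+y\mid x\in X_{reg},\ y\in N_xX\cap\DL_{r-1}\}$.

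For ``$\subseteq$'', I would fix such a triple $(y,x,y+x)$. Then $(y,x)\in\Conormal(X^*)$, so there is a sequence $(y_i,x_i)\to(y,x)$ with $y_i\in X^*_{reg}$ and $x_i\in N_{y_i}X^*$; generically $x_i\in X_{reg}$ (recall $X_{sing}=\{0\}$), so the coordinate swap of ED Duality (Theorem~\ref{EDDuality}) yields $(x_i,y_i)\in\Conormal(X)$, and since $\Conormal(X)$ is closed the limit satisfies $(x,y)\in\Conormal(X)$, i.e.\ $y\in N_xX$ when $x$ is regular. Combined with the hypothesis $y\in\DL_{r-1}$ this gives $\DL_r\subseteq\overline{\{x+y\mid x\in X_{reg},\ y\in N_xX\cap\DL_{r-1}\}}$, the closure on the right absorbing the locus $x=0$ exactly as in Proposition~\ref{StructDL2}.

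For ``$\supseteq$'', I would take any pair $(x,y)$ with $x\in X_{reg}$ and $y\in N_xX\cap\DL_{r-1}$, and set $T=x+y$. Since $y=T-x\in N_xX$ and $x\in X_{reg}$, the point $x$ is a critical rank-one approximation of $T$; by ED Duality (Theorem~\ref{EDDuality}) $y=T-x$ is then a critical point of $d_T$ on $X^*$, so $y\in X^*$ and hence $y\in A$. As $T-x=y\in\DL_{r-1}$, the tensor $T$ lies in $\DL_r$ by the very construction $\DL_r=\DL_{X^*\cap\DL_{r-1}}$, which is the remaining inclusion.

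The only step I expect to demand a little care, precisely as in Proposition~\ref{StructDL2}, is the genericity input in ``$\subseteq$'': one needs that a generic point of each irreducible component of $\Conormal(X^*)$ has its $\pi_2$-coordinate in $X_{reg}$, so that the swap of ED Duality applies along the approximating sequence and the limit then closes up inside $\Conormal(X)$. Notably, possible reducibility or singular behavior of $\DL_{r-1}$ (hence of $A$) should not affect the argument, because $\DL_{r-1}$ enters only through the closed condition $y\in\DL_{r-1}$, which is preserved under the limit in the first inclusion and recovered directly in the second.
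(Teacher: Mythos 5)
Your proof is correct and is essentially the argument the paper has in mind: the paper states that the proof of this proposition ``goes in a similar fashion to the proof of Proposition~\ref{StructDL2},'' and you have carried out exactly that adaptation, replacing $X\cap X^*=X$ by $A=X^*\cap\DL_{r-1}$ and running the same conormal-variety swap for ``$\subseteq$'' and the same criticality check for ``$\supseteq$'' (with the extra observation, implicit in the $\DL_2$ case, that $x\in X_{reg}$, $y\in N_xX$ forces $y\in X^*$).
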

We have the following properties of $\DL_r$.
\begin{proposition}\label{PropDLi}
Let $V=\mathbb{C}^{n_1}\otimes\ldots\otimes\mathbb{C}^{n_p}$ and let $X$ be the cone over the variety of rank-one tensors and $X^*$ its dual. Then we have that 
\begin{itemize}
	\item[a.] $\DL_r\subseteq \DL_{r+1}$;
	\item[b.] Any $T\in \DL_r$ has border rank at most $r$;
	\item[c.] $\DL_r$ contains all rank at most $r$ weakly orthogonally decomposable tensors.
\end{itemize}
\end{proposition}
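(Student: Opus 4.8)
The plan is to prove all three items simultaneously by induction on $r$, in each case reducing the statement to the corresponding one for $\DL_{r-1}$ by means of the structure description in Proposition~\ref{StructDLi}; the base cases $r\le 2$ are exactly Proposition~\ref{PropforDL1} (with $r=1$ amounting to the trivial observation $\DL_1=X$). For (a), assuming inductively that $\DL_{r-1}\subseteq\DL_r$ (the case $r=1$ being Proposition~\ref{PropforDL1}(a)), for every $x\in X_{reg}$ one has $N_xX\cap\DL_{r-1}\subseteq N_xX\cap\DL_r$, so the set $\{x+y\mid x\in X_{reg},\ y\in N_xX\cap\DL_{r-1}\}$ whose closure is $\DL_r$ is contained in the set $\{x+y\mid x\in X_{reg},\ y\in N_xX\cap\DL_r\}$ whose closure is $\DL_{r+1}$; passing to closures gives $\DL_r\subseteq\DL_{r+1}$.

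For (b), I would assume inductively that every element of $\DL_{r-1}$ has border rank at most $r-1$ (the case $r=1$ is immediate since $\DL_1=X$). Dropping the normal-space constraint in Proposition~\ref{StructDLi} yields $\DL_r\subseteq\overline{\{x+y\mid x\in X,\ y\in\DL_{r-1}\}}$. Given such $x$ and $y$, write $y=\lim_j y_j$ with each $y_j$ of rank at most $r-1$; then $x+y_j$ has rank at most $r$ and converges to $x+y$, so $x+y$ has border rank at most $r$. Since the locus of tensors of border rank at most $r$ is closed, it contains $\DL_r$.

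For (c), let $T=\sum_{i=1}^m x_1^i\otimes\ldots\otimes x_p^i$ be weakly orthogonally decomposable with all factors nonzero and $m\le r$, and assume inductively that $\DL_{r-1}$ contains every rank $\le r-1$ weakly orthogonally decomposable tensor (the base case $r\le 2$ is Proposition~\ref{PropforDL1}(c)). If $m\le r-1$, then $T\in\DL_{r-1}\subseteq\DL_r$ by (a). If $m=r$, set $x:=x_1^1\otimes\ldots\otimes x_p^1$ and $y:=\sum_{i=2}^{r}x_1^i\otimes\ldots\otimes x_p^i$; then $x\in X_{reg}$ (a nonzero rank-one tensor, and $X_{sing}=\{0\}$) and $y$ is weakly orthogonally decomposable of rank $\le r-1$, hence $y\in\DL_{r-1}$. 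Using $T_xX=\sum_{j=1}^p x_1^1\otimes\ldots\otimes\CC^{n_j}\otimes\ldots\otimes x_p^1$ and the Bombieri--Weyl form, for every $j$ and every $v\in\CC^{n_j}$ we get
\[
\Big\langle x_1^1\otimes\ldots\otimes v\otimes\ldots\otimes x_p^1,\ \sum_{i=2}^{r}x_1^i\otimes\ldots\otimes x_p^i\Big\rangle_{BW}=\sum_{i=2}^{r}\Big(\prod_{l\neq j}\langle x_l^1,x_l^i\rangle_l\Big)\langle v,x_j^i\rangle_j=0,
\]
because for each $i\ge 2$ there is at least one index $l\neq j$ (here I use $p\ge 2$) with $\langle x_l^1,x_l^i\rangle_l=0$ by weak orthogonality. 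Thus $y\in N_xX\cap\DL_{r-1}$ and $T=x+y$, so $T\in\DL_r$ by Proposition~\ref{StructDLi}.

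I expect no serious obstacle: everything follows formally from Proposition~\ref{StructDLi} and Proposition~\ref{PropforDL1}, exactly as in the $\DL_2$ case. The only points demanding a little care are, in (b), that one perturbs the approximating sequence of $y$ rather than $y$ itself (and then invokes closedness of the border-rank-$\le r$ locus, or runs a diagonal argument if one prefers to stay inside the structure-theorem sequences), and, in (c), the standing assumption $p\ge 2$, which is precisely what guarantees that the product $\prod_{l\neq j}\langle x_l^1,x_l^i\rangle_l$ always contains at least one vanishing factor.
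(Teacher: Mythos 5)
Your proof is correct and follows essentially the same route as the paper: all three items are handled by induction on $r$, reducing to the structure theorem (Proposition~\ref{StructDLi}) and the $r\le 2$ base cases from Proposition~\ref{PropforDL1}, with part~(b) using the same sequence-and-closedness argument and part~(c) spelling out the Bombieri--Weyl computation that the paper only cites by reference. The only difference is presentational: you make the inductive step in~(c) fully explicit where the paper merely says ``the proof goes as in Proposition~\ref{PropforDL1}, part~$c$.''
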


\begin{proof}
\item[a.]{First, we have that $\DL_1\subset \DL_2$. Indeed, let $x\in X_{reg}$, then $x=x+0\in \DL_2$, as $x\in X_{reg}$ and $0\in N_xX\cap X$. So $X_{reg}\subset \DL_2$. Then, by taking closures, $X=\DL_1\subset \DL_2$.\\
We now show that $\DL_2\subset \DL_3$. Let $x+y\in \DL_2$ not on the boundary, then $x\in X_{reg}$ and $y\in N_xX\cap \DL_1=N_xX\cap X$. Since $y\in \DL_1$, by the previous step we have that $y\in \DL_2$, so $y\in N_xX\cap \DL_1$ implies $y\in N_xX\cap \DL_2$. Therefore, $x\in X_{reg},\ y\in N_xX\cap\DL2$, so $x+y\in \DL_3$. Again by taking closures, $\DL_2\subset \DL_3$.\\
To conclude, assume by induction that $\DL_{r-1}\subset \DL_{r}$, and let $x+y\in \DL_r$ not in the boundary, so $x\in X_{reg}$ and $y\in\DL_{r-1}\cap N_xX\subset \DL_r\cap N_xX$, thus $x\in X_{reg}$ and $y\in \DL_r\cap N_xX$, then $x+y\in \DL_{r+1}$. Taking closures gives $\DL_r\subset \DL_{r+1}$.}
We remark that the $\DL_r\cap X^*\subseteq  \DL_{r+1}$ inclusion also follows from  \cite[Corollary $3.2$]{HR22}. 

	\item[b.] {Let $T=x+y\in \DL_2$ not in the boundary, then $x,y\in X$, so $T$ has rank at most two. Taking closures implies that $\DL_2$ is contained in the second secant variety of $X$.} For any tensor $T$ which is not on the boundary of $\DL_r$, there exists an $x\in X$ critical approximation such that $T-x\in\DL_{r-1}$. By induction $T-x$ has border rank at most $r-1$, so there exists a sequence $T_n\to T-x$, with each $T_n$ having rank at most $r-1$. This way the sequence $T_n+x$ converges to $T$ and all $T_n+x$ have rank at most $r$. So $T$ is of border rank at most $r$. But having border rank at most $r$ is an algebraically closed condition, so all tensors $T\in \DL_r$ have border rank at most $r$;
	
	\item[c.] By construction all rank at most $r$ weakly orthogonally decomposable tensors $T$ are of the form $\displaystyle{\sum_{k=1}^{r}x_1^k\otimes\ldots\otimes x_p^k}$, with $x_j^k\perp x_j^l$, for all $k\neq l$ and the proof goes as in Proposition~\ref{PropforDL1}, part $c.$
\end{proof}


	
\section{Symmetric tensors}

Now let us switch our attention to the space of symmetric tensors. Let us denote the closure of the set of all linear forms \[\{v_1 x_1+\ldots+v_n x_n\mid (v_1,\ldots,v_n)\in \mathbb{C}^n\}\]  by $\mathcal{S}^1\mathbb{C}^n$ and the variety of all homogeneous degree $d$ polynomials in $n$ variables by $\mathcal{S}^d\mathbb{C}^n$. Let $X$ be the closure of the $d$-th powers of linear forms in $\mathcal{S}^d\mathbb{C}^n$, namely the cone over the Veronese variety.

\begin{proposition}\label{prop: isotropic}
    Suppose $l_1,\dots,l_r\subset \mathcal S^1\CC^n$ are all isotropic and orthogonal to each other. Then for any $l\in \langle l_1,\dots,l_r\rangle_{\CC}$ the resulting $l^d$ is a critical rank-one approximation of $f=\displaystyle{\sum _{i=1}^rl_i^d}$.
\end{proposition}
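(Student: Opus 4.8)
The plan is to verify the critical-point equation directly, using the characterization from the discussion after \eqref{CritEqu}: a point $l^d \in X_{\mathrm{reg}}$ is a critical rank-one approximation of $f$ if and only if $f - l^d \in N_{l^d} X$, i.e. $f - l^d$ is orthogonal (with respect to the Bombieri-Weyl form) to the tangent space $T_{l^d}X$. Recall that this tangent space is the linear span of $\{l^{d-1}m : m \in \mathcal S^1\CC^n\}$. So the whole statement reduces to computing $\langle f - l^d, l^{d-1}m\rangle_{BW}$ for an arbitrary linear form $m$ and showing it vanishes.

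First I would fix $l = \sum_{j=1}^r c_j l_j$ for scalars $c_j$, and expand $f - l^d = \sum_{i=1}^r l_i^d - \big(\sum_j c_j l_j\big)^d$. The key computational tool is the apolarity/Bombieri-Weyl identity $\langle u^d, v^{a} w^{b}\rangle_{BW}$-type formula; more concretely, the pairing of $d$-th powers is $\langle u^d, v^d\rangle_{BW} = \langle u,v\rangle^d$, and differentiating (or using the standard formula $\langle u^d, v^{d-1}w\rangle_{BW} = \langle u,v\rangle^{d-1}\langle u,w\rangle$) gives what I need. Using this, $\langle l_i^d, l^{d-1}m\rangle_{BW} = \langle l_i, l\rangle^{d-1}\langle l_i, m\rangle$. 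Now $\langle l_i, l\rangle = \sum_j c_j \langle l_i, l_j\rangle = c_i\langle l_i,l_i\rangle + \sum_{j\neq i}c_j\langle l_i,l_j\rangle = 0$, since each $l_i$ is isotropic ($\langle l_i,l_i\rangle=0$) and the $l_j$ are mutually orthogonal. Since $d \geq 2$, the exponent $d-1 \geq 1$, so every term $\langle l_i^d, l^{d-1}m\rangle_{BW}$ vanishes, hence $\langle f, l^{d-1}m\rangle_{BW}=0$. By the same computation $\langle l^d, l^{d-1}m\rangle_{BW} = \langle l,l\rangle^{d-1}\langle l,m\rangle$, and $\langle l,l\rangle = \sum_{i,j}c_ic_j\langle l_i,l_j\rangle = 0$ as well (again isotropy plus orthogonality), so this term also vanishes. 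Therefore $\langle f - l^d, l^{d-1}m\rangle_{BW}=0$ for all $m$, which is exactly the condition $f - l^d \in N_{l^d}X$.

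The only remaining point is the regularity hypothesis: the characterization requires $l^d \in X_{\mathrm{reg}}$, and the singular locus of the cone over the Veronese is just $\{0\}$. So I would note that if $l \neq 0$ then $l^d \neq 0$ and we are at a regular point; the degenerate case $l = 0$ gives $l^d = 0$, for which the statement is vacuous (or one interprets $0$ as the trivial critical approximation). I do not expect a real obstacle here — the argument is essentially a one-line orthogonality computation once the Bombieri-Weyl derivative formula is in hand. The one place to be slightly careful is making sure the formula $\langle u^d, v^{d-1}w\rangle_{BW} = \langle u,v\rangle^{d-1}\langle u,w\rangle$ is invoked correctly (it follows by polarizing the defining identity $\langle u^d,v^d\rangle_{BW}=\langle u,v\rangle^d$, or can be cited from \cite{DOT18} as was done in the proof of Proposition~\ref{PropforDL1}), and to state explicitly that we use $d \geq 2$ so that a single factor of $\langle l_i, l\rangle$ or $\langle l,l\rangle$ survives in each term.
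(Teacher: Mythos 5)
Your argument is essentially identical to the paper's: both verify $f - l^d \perp T_{l^d}X$ by expanding against a general tangent vector $l^{d-1}v$ via the Bombieri--Weyl pairing formula $\langle u^d, w^{d-1}v\rangle_{BW} = \langle u,w\rangle^{d-1}\langle u,v\rangle$, and both kill each term using $\langle l, l_i\rangle = 0$ and $\langle l,l\rangle = 0$ (isotropy plus mutual orthogonality). Your extra remarks on the regularity of $l^d$ and the need for $d\geq 2$ are correct and harmless additions, but they do not change the substance.
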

\begin{proof}
    Let $l=\displaystyle{\sum_{i=1}^{r}}\alpha_il_i$, with $\alpha_i\in\CC$, then $l^d$ is a critical rank-one approximation of $f$ if and only if $f-l^d\perp T_{l^d}X$. We recall that the tangent space at $l^d$ to $X$ is $\{l^{d-1}v \mid v\in\mathcal{S}^1\CC^n\}.$ So we want to check that
    \[\langle l^{d-1}v,f-l^d\rangle_{BW}=\langle l^{d-1}v,f\rangle_{BW}-\langle l^{d-1}v,l^d\rangle_{BW}=0,\] for every $v\in \mathcal{S}^1\CC^n$. But now notice that since $l$ is isotropic, then by the property in \cite[Section $2$, Page $4$]{DOT18} we get that \[\langle l^{d-1}v,l^d\rangle_{BW}=\langle l,l\rangle^{d-1}\langle v,l\rangle=0.\]
    So it is enough to show that $f\perp T_{l^d}X$. Indeed 
\[\langle l^{d-1}v,f\rangle_{BW}=\sum_{i=1}^r\langle l^{d-1}v,l_i^d\rangle_{BW}=\sum_{i=1}^r\langle l,l_i\rangle^{d-1}\langle v,l_i\rangle=0,\]
because $l=\displaystyle{\sum_{i=1}^{r}}\alpha_il_i$ and $l_i$ are all isotropic and pairwise orthogonal, so we get that $\langle l,l_i\rangle=0$, for all $i=1,\ldots,r.$
\end{proof}

\begin{theorem}\label{form_of_DL}
    For any $r$ we get that $\DL_r$ is the variety of weakly orthogonally decomposable tensors of rank at most $r$, namely 
    \[\DL_r=\overline{\{l_1^d+\dots+\l_r^d\mid l_i\in\mathcal{S}^1\CC^n,\ l_i\perp l_j,\ i\neq j\}}.\] 
\end{theorem}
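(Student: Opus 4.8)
The plan is to prove the two inclusions separately, with the nontrivial direction handled by induction on $r$. One inclusion is already available: Proposition~\ref{PropDLi}(c) (whose proof is the symmetric specialization of Proposition~\ref{PropforDL1}(c), i.e.\ essentially the computation in Proposition~\ref{prop: isotropic}) shows that every weakly orthogonally decomposable form $\sum_{i=1}^{r}l_i^d$ of rank at most $r$ lies in $\DL_r$; since $\DL_r$ is closed, the whole variety $\overline{\{l_1^d+\dots+l_r^d\mid l_i\perp l_j,\ i\neq j\}}$ sits inside $\DL_r$. The real content is therefore the reverse inclusion.

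I would prove it by induction on $r$. The base case $r=1$ is immediate: $\DL_1=X$ is the cone over the Veronese, which is exactly the set of $d$-th powers $l^d$, the weakly orthogonally decomposable forms of rank at most $1$. For the inductive step, assume $\DL_{r-1}$ is the variety of weakly orthogonally decomposable forms of rank at most $r-1$. By Proposition~\ref{StructDLi}, a general point $T\in\DL_r$ has the form $T=l^d+g$ with $l^d\in X_{reg}$ (so $l\neq 0$) and $g\in N_{l^d}X\cap\DL_{r-1}$. Choosing $T$ general and using the inductive hypothesis, I may take $g=\sum_{i=1}^{r-1}l_i^d$ with $l_i\perp l_j$ for $i\neq j$ and, away from a proper subvariety, with $l_1,\dots,l_{r-1}$ linearly independent (this linear-independence genericity holds whenever $r\le n$; for $r>n$ both sides of the claimed identity have already stabilized, by the Main Theorem and a direct check, so the statement is inherited).

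The crux is then the manipulation from Proposition~\ref{prop: isotropic}. The condition $g\in N_{l^d}X$ means $\langle g,\,l^{d-1}v\rangle_{BW}=0$ for all $v\in\mathcal{S}^1\CC^n$; expanding $g$ and using $\langle l^{d-1}v,l_i^d\rangle_{BW}=\langle l,l_i\rangle^{d-1}\langle v,l_i\rangle$, and letting $v$ vary, this is equivalent to
\[\sum_{i=1}^{r-1}\langle l,l_i\rangle^{d-1}\,l_i=0\qquad\text{in }\mathcal{S}^1\CC^n.\]
Since $l_1,\dots,l_{r-1}$ are linearly independent, every coefficient vanishes, so $\langle l,l_i\rangle^{d-1}=0$ and hence $\langle l,l_i\rangle=0$ for all $i$. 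Thus $l\perp l_i$ for every $i$, and together with the pairwise orthogonality of the $l_i$ this says precisely that $T=l^d+\sum_{i=1}^{r-1}l_i^d$ is weakly orthogonally decomposable of rank at most $r$. As this holds on a dense subset of $\DL_r$, passing to closures gives the reverse inclusion and, with the first part, the theorem.

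The step I expect to need the most care is the bookkeeping hidden in the word ``general'' above: one has to make sure that a general point of every irreducible component of $\DL_r$ genuinely arises as $l^d+g$ with $l\neq 0$ and with $g$ an actual (not merely limiting) weakly orthogonally decomposable form of $\DL_{r-1}$ whose constituents $l_i$ are linearly independent. The way out is to note that, by Proposition~\ref{PropDLi}(a) together with the inductive hypothesis, $\DL_{r-1}$ already lies in the target variety $\overline{\{l_1^d+\dots+l_r^d\mid l_i\perp l_j\}}$, so any component of $\DL_r$ that happens to lie over the degenerate boundary of $\DL_{r-1}$, or over the locus where the $l_i$ become linearly dependent, is harmlessly contained in the target; the genuinely new components are dominated by families in which $g$ sweeps out a dense — hence honestly decomposable, linearly independent — portion of a component of $\DL_{r-1}$, and there the computation above applies and then extends by continuity. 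A clean way to package all of this is to replace the set-level description by the incidence variety $\overline{\{(l,l_1,\dots,l_{r-1})\mid l_i\perp l_j,\ \sum_{i}\langle l,l_i\rangle^{d-1}l_i=0\}}$, observe it surjects onto $\DL_r$ under $(l,l_1,\dots,l_{r-1})\mapsto l^d+\sum_i l_i^d$, and run the argument on the dense linearly-independent locus of each of its components.
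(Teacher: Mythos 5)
Your architecture matches the paper's: closure, two inclusions, the easy direction from Proposition~\ref{PropDLi}(c), and an induction driven by Proposition~\ref{StructDLi} and the Bombieri--Weyl computation $\langle l^{d-1}v, l_i^d\rangle_{BW}=\langle l,l_i\rangle^{d-1}\langle v,l_i\rangle$, leading to $\sum_i\langle l,l_i\rangle^{d-1}l_i=0$. Up to this point you are in lockstep with the paper.

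The gap is the step where you conclude ``since $l_1,\dots,l_{r-1}$ are linearly independent, every coefficient vanishes.'' You are aware this needs justification and you propose two escape routes, but neither works. First, linear independence genuinely fails once $r-1>n$, and the theorem must be proved for \emph{all} $r$ (otherwise you cannot even state stabilization). Second, your fallback for $r>n$ invokes the Main Theorem, but the Main Theorem is Theorem~\ref{thm: stab} together with Proposition~\ref{StepofStabilization}, and the proof of Theorem~\ref{thm: stab} literally opens with ``By Theorem~\ref{form_of_DL} we may assume $f$ has the form\dots'' So appealing to it here is circular. Third, the ``direct check'' you gesture at --- that the weakly-orthogonally-decomposable chain has already stabilized by rank $n$ --- is false: Proposition~\ref{StepofStabilization} gives stabilization at $\max_s\{g_s+n-2s\}$, which the paper's Table~1 shows can be much larger than $n$ (e.g.\ $n=6$, $d=6$ gives stabilization at $r=10$). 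Finally, even for small $r$ the linearly-independent locus need not be dense in each component of your incidence variety, because the $l_i$ are constrained to lie inside $N_{l^d}X$, not free; and for components where the $l_i$ are forced isotropic and proportional, your argument produces no conclusion at all.

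The paper avoids linear independence entirely. For a non-isotropic $l_i$, testing $v=l_i$ in the identity and using $l_i\perp l_j$ for $j\neq i$ kills all other terms and leaves $\langle l,l_i\rangle^{d-1}\langle l_i,l_i\rangle=0$, so $\langle l_i,l_i\rangle\neq 0$ forces $l\perp l_i$ --- no independence needed. For the isotropic $l_i$'s, the paper does not try to show $l\perp l_i$ for each of them directly; it picks a \emph{maximal linearly independent subset} $l_1,\dots,l_{r'}$ among the isotropic ones, observes it is enough to prove $l\perp l_1,\dots,l_{r'}$ (since $l$ perpendicular to a spanning set is perpendicular to the span, hence to the remaining dependent isotropic terms), and then extracts each $\langle l,l_k\rangle^{d-1}=0$ by testing against a $v=u$ dual to $l_k$ within that independent subfamily. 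This handles the degenerate configurations that break your argument and is the actual content of the inductive step; reproducing it is what your proposal is missing.
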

\begin{proof}
First, for the $"\supseteq"$ inclusion follows from Proposition~\ref{PropDLi} part $c.$ We continue by proving the $"\subseteq"$ inclusion. First, by Proposition~\ref{StructDL2} we get that \[\DL_2=\overline{\{l_1^d+l_2^d\mid l_1^d\perp T_{l_2^d}X\}},\] from this follows that $\langle l_1^d,l_2^d\rangle_{BW}=0$, so $l_1\perp l_2$ and so $\DL_2$ has the required structure. Now we proceed by induction. We assume that \[\DL_r=\overline{\{l_1^d+\dots+\l_r^d\mid l_i\in\mathcal{S}^1\CC^n,\ l_i\perp l_j,\ i\neq j\}}.\] 
Now by Proposition~\ref{StructDLi} we get that $\DL_{r+1}$ is the closure of $\{f+l^d | f\in \DL_r, f\perp T_{l^d}X \}$. This is equivalent to saying that generically $f$ has the form $f=\sum_{i=1}^r l_i^d$ with $l_i\perp l_j$, and $f\perp T_{l^d}X=\{l^{d-1}v|v\in\mathcal S^1\CC^n \}$. Thus \begin{equation}\label{eq: DL_r prod}
       \langle f,l^{d-1}v\rangle_{BW}= \sum_{i=1}^r\langle l_i^d,l^{d-1}v\rangle=\sum_{i=1}^r\langle l_i,l\rangle^{d-1}\langle l_i,v\rangle=0
    \end{equation}
    for all $v\in\mathcal S^1\CC$, where equality holds by \cite[section 2.2]{DOT18}.

If $l_i$ is not isotropic, by considering $v=l_i$ {in \eqref{eq: DL_r prod} and utilizing $l_i\perp l_j$ for every $j\neq i$, \eqref{eq: DL_r prod} simplifies to $\langle l, l_i\rangle^{d-1}=0$, so $l_i\perp l$}. We are left to deal with the case that some $l_i$'s are isotropic, suppose that $l_1,\dots,l_{r'}$ is a maximally linearly independent set among the isotropic linear forms in the sum defining $f$. Notice that it is enough to prove that $l\perp l_1,\ldots,l_{r'}$ because then $l$ is perpendicular to everything in their span (hence to the other isotropic linear forms in the sum defining $f$ that are linearly dependent of $l_1,\ldots,l_{r'}$ ).

Now there exists a $u\in\mathcal{S}^1\CC^n$, such that $u\perp l_2,\dots,l_{r'}$ but $u\notperp l_1$. Because otherwise if for all $u\in\mathcal{S}^1\CC^n$, such that $u\perp l_2,\dots,l_{r'}$ it would follow that $u\perp l_1$ as well, then this would mean that $\displaystyle{\langle l_2,\ldots,l_{r'}\rangle_{\CC}^{\perp}}\subseteq \langle l_1\rangle_{\CC}^{\perp}$, which in particular implies that $\langle l_1\rangle_{\CC}\subseteq \langle l_2,\ldots,l_{r'}\rangle_{\CC}$ contradicting the linear independence of the $l_i's$.


Now by selecting $v=u$ in \eqref{eq: DL_r prod}, we obtain $\langle l_1,l\rangle^{d-1}=0$, hence $l_1\perp l$. The same argument holds for $i=2,\dots,r'$. {We just proved $l\perp l_i$ for every $i=1,\dots,r$}, so it follows that 

\[\DL_{r+1}=\overline{\{ l_1^d+\dots+l_{r+1}^d\mid l_i\perp l_j, i\neq j \}}.\]


\end{proof}

\begin{theorem}\label{thm: stab}
    Let $X\subset \mathcal{S}^d\mathbb{C}^n$ be the cone over the Veronese variety and our inner product in $\mathcal{S}^d\mathbb{C}^n$ is the Bombieri-Weyl inner product. Then the chain $X=\DL_1\subset\dots\subset \DL_r\subset\dots$ stabilizes. 
\end{theorem}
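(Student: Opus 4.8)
The plan is to leverage Theorem~\ref{form_of_DL}, which identifies $\DL_r$ with the (closure of the) set of weakly orthogonally decomposable forms of rank at most $r$, and then to bound the number of pairwise orthogonal linear forms that can appear in such a decomposition. Concretely, I would argue as follows. Suppose $l_1,\dots,l_r\in\mathcal{S}^1\CC^n$ are pairwise orthogonal with respect to the (extended, possibly degenerate) bilinear form, and none of them is zero. Split them into two groups: those that are non-isotropic and those that are isotropic. The non-isotropic ones, being pairwise orthogonal and each having nonzero self-pairing, are automatically linearly independent, so there are at most $n$ of them. For the isotropic ones the situation is subtler, but the key linear-algebra fact is that a totally isotropic subspace of a nondegenerate symmetric bilinear form on $\CC^n$ has dimension at most $\lfloor n/2\rfloor$; and in our mixed situation, once we quotient by (or complement) the span of the non-isotropic forms, the isotropic forms that survive must lie in a totally isotropic subspace of the orthogonal complement, whose dimension is controlled. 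Carrying out this bookkeeping carefully yields an absolute upper bound $N(n,d)$ on the maximal number of pairwise orthogonal nonzero linear forms that can occur, and hence $\DL_r=\DL_N$ for all $r\ge N$, i.e.\ the chain stabilizes. (This is exactly the content of the referenced Proposition~\ref{StepofStabilization}, which pins down $N$.)

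The first step I would carry out is to make precise the reduction: by Theorem~\ref{form_of_DL}, $\DL_r\subsetneq\DL_{r+1}$ can only happen if there exists a weakly orthogonally decomposable form requiring exactly $r+1$ pairwise orthogonal summands and not expressible with fewer. So it suffices to show that for $r$ large enough, every weakly orthogonally decomposable form of ``rank $r$'' is in fact already in $\DL_{r-1}$ — equivalently, that one cannot have arbitrarily long orthogonal systems $l_1,\dots,l_r$ of nonzero linear forms all contributing genuinely. Then the second step is the linear-algebra core: classify how long a pairwise orthogonal family of nonzero vectors in $(\CC^n,\langle\cdot,\cdot\rangle)$ can be. Here I would write $\CC^n = W \oplus W^\perp$ where $W$ is spanned by a maximal subset of non-isotropic members of the family (so $\dim W \le n$ and $\langle\cdot,\cdot\rangle|_W$ is nondegenerate, while $\langle\cdot,\cdot\rangle|_{W^\perp}$ is also nondegenerate), observe that all remaining members lie in $W^\perp$ and are pairwise orthogonal \emph{and} isotropic there, hence span a totally isotropic subspace of $W^\perp$, of dimension at most $\lfloor \dim W^\perp / 2\rfloor$. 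Optimizing $\dim W + \lfloor(n-\dim W)/2\rfloor$ over $\dim W$ gives the bound; the extreme is $\dim W = n$ (giving $n$) versus $\dim W$ small (giving roughly $n/2$ plus a few), so in fact $n$ pairwise orthogonal nonzero forms is the true maximum and $N$ will be essentially $n$ (the exact statement, accounting for the role of $d$, being deferred to Proposition~\ref{StepofStabilization}).

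The third and final step is to translate the bound on orthogonal families back into stabilization of the chain: if no orthogonal family of nonzero linear forms has length exceeding $N$, then any weakly orthogonally decomposable form has such a decomposition with at most $N$ terms, so $\DL_N$ already contains all weakly orthogonally decomposable forms, and since each $\DL_r$ for $r\ge N$ equals that same variety (using Theorem~\ref{form_of_DL} again), we get $\DL_N = \DL_{N+1} = \cdots$. Taking Zariski closures is harmless since we are comparing two varieties that are equal on a dense set.

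The main obstacle I anticipate is the isotropic case of the linear-algebra step: it is tempting but false that pairwise orthogonal nonzero vectors are linearly independent (two proportional isotropic vectors are ``orthogonal'' to each other and to themselves), so I must be careful to phrase the bound in terms of the \emph{dimension of the span} of the family, or equivalently pass to a maximal linearly independent subfamily as is already done inside the proof of Theorem~\ref{form_of_DL}. A secondary subtlety is ensuring that when one reduces a rank-$r$ weakly orthogonally decomposable form to fewer terms, the reduced form is still weakly orthogonally decomposable (i.e.\ one does not accidentally break the pairwise-orthogonality when combining or discarding summands) — but since we only ever discard or rescale summands within a fixed orthogonal system, orthogonality is preserved, so this should go through cleanly.
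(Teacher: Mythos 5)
The proposal has a genuine gap, and in fact it would prove a false bound on the step of stabilization. Your strategy is to bound the number of summands in a weakly orthogonal decomposition by bounding the length of a pairwise orthogonal system of nonzero linear forms. You correctly notice that such a system can contain arbitrarily many \emph{isotropic} members (any two vectors in a fixed totally isotropic subspace are mutually orthogonal), so pairwise orthogonality does not bound the number of terms — only the dimension of their span. Your proposed fix, to ``pass to a maximal linearly independent subfamily,'' is exactly where the argument breaks: after doing so you still face a sum of $d$-th powers of isotropic forms $l_1^d+\dots+l_{r'}^d$ all lying in a fixed totally isotropic subspace $W' \subseteq \CC^n$ with $\dim W' = s \le \lfloor n/2 \rfloor$, and you have not shown that this sum can be re-expressed as a sum of at most $s$ pairwise orthogonal $d$-th powers. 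In $W'$ the bilinear form vanishes identically, so orthogonality imposes no constraint, and the number of $d$-th powers needed to write a generic element of $S^d W'$ is the \emph{generic rank} of $S^d\CC^s$, which grows without bound as $d\to\infty$. Your conclusion that ``$N$ will be essentially $n$'' is demonstrably false: the paper's Table~\ref{tab:my_label} shows, for example, that for $n=4,\ d=15$ the stabilization step is $8$, and for $n=10,\ d=15$ it is $776$.

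The paper's proof handles precisely this isotropic case with an additional idea that is missing from your sketch. Once $l$ is forced to be isotropic and to lie in the span $\langle l_1,\dots,l_{s}\rangle_\CC$ of the existing isotropic directions (which is legitimate by Proposition~\ref{prop: isotropic}), the paper invokes the result of Carlini–Catalisano–Chiantini \cite{CCC15} to split the rank of $f+l^d$ as (rank of the isotropic part in $S^d\CC[l_1,\dots,l_s]$) plus (number of non-isotropic terms), and then argues: either the rank of $f+l^d$ does not increase, in which case one can re-decompose inside the isotropic subspace and stay in $\DL_r$; or the rank increases, which can happen only until the isotropic part reaches the generic rank $g_s$ of $S^d\CC^s$. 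Beyond that point every further element is a limit of elements already in $\DL_{g_s+r''}$, since generic-rank elements are dense. This density/limit argument is the engine of the stabilization, and it has no analogue in your proposal; the combinatorial count of pairwise orthogonal directions that you propose cannot substitute for it.
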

\begin{proof}

Notice that by starting with a generic element $f\in \DL_r$ and adding $l^d$ such that it is a critical rank-one approximation of $f+l^d$, we obtain a generic element of $\DL_{r+1}$, thus we only need to show this process stabilizes for a generic $f\in \DL_r$.

By Theorem~\ref{form_of_DL} we may assume that $f$ has the form $f=\sum_{i=1}^{r'} l_i^d+\sum_{j=1}^{r''}y_j^d$, $r'+r''=r$, where each summand is orthogonal to each other, where $l_i$'s are isotropic and $y_j$'s are not isotropic. {Now if $l$ is not isotropic, then $l$ is not in the span of the $y_j$'s, otherwise, $l\notperp y_j$ for some $j$ and $f+l^d\notin \DL_{r+1}$}. Hence the dimension of the span of the non-isotropic terms increases. This can not happen infinitely many times, so this case will eventually stop happening. So we can now assume that $l$ is isotropic. Now again we have two cases. Either $l\notin \langle l_1,\ldots,l_{r'}\rangle_{\CC}$ or $l\in\langle l_1,\ldots,l_{r'}\rangle_{\CC}$. In the first case, the dimension of the span of the $l_i$'s will increase, but this can not happen infinitely many times, so this case will eventually stop happening. So we can now assume that we are in the second case when $l\in\langle l_1,\ldots,l_{r'}\rangle_{\CC}$ and indeed $l$ can be freely chosen from this span by Proposition \ref{prop: isotropic}. We now have to show that also this case will eventually stop happening.

Let us suppose that $l_1,\ldots,l_{s}$ is a maximal linearly independent subset among the $l_i$'s. Then observe that in this case $f+l^d$ can be realized as a polynomial in the essential variables $l_1,\dots,l_{s},y_{1}\dots,y_{r''}$. So we get that

\[f+l^d=\sum_{i=1}^{r'+1} l_i^d+\sum_{j=1}^{r''} y_j^d,\text{ with }l_i\in \langle l_1,\ldots,l_{s}\rangle_{\CC},\ l=l_{r'+1}.
\] 
Moreover, for any $l_i,l_j\in \langle l_1,\ldots,l_{s}\rangle_{\CC}$ we get that $l_i\perp l_j$ and also $l_i\perp y_k$, for any $1\leq k\leq r''$.
Now by \cite[Theorem $1.1$ or Corollary $3.2$]{CCC15} the rank of $f+l^d$ is equal to the rank of $\sum_{i=1}^{r'+1} l_i^d$ plus $r''$. Now we again have two cases. 

First, the rank of $f+l^d$ is less than or equal to the rank of $f$. In this case, $f+l^d$ has a minimal decomposition of the form
\[f+l^d= \sum_{i=1}^{t} g_i^d+\sum_{j=1}^{r''} y_j^d,\text{ with }g_i\in \langle l_1,\ldots,l_{s}\rangle_{\CC},\] where $t\leq r'$. Moreover, since $g_i\in \langle l_1,\ldots,l_{s}\rangle_{\CC}$, then for any $g_i,g_j$ we also get that $g_i\perp g_j$ and also $g_i\perp y_k$, for any $1\leq k\leq r''$. So this means that $f+l^d$ has such a sum decomposition, that actually $f+l^d\in \DL_{t+r''}\subseteq \DL_{r}$, since $t+r''\leq r'+r''=r$. So in this case we do not get an new element in $\DL_{r+1}$. 

Second, the rank of $f+l^d$ is bigger than the rank of $f$ and $f+l^d\in\DL_{r+1}\setminus \DL_r$. It is clear that this will be the typical behavior while $r'$ is less than the generic rank $g$ in $\mathrm{S}^d\CC[l_1,\dots,l_{s}]$. So this case will happen until $r'=g$. Now we can assume that $f\in \DL_{g+r''}$. Notice that if we did not yet achieve stabilization till this step, then there must exist an element of $\DL_{g+r''}$ with rank equal to $g+r''$. Otherwise, if all elements of $\DL_{g+r''}$ would have rank less than $g+r''$, then we may proceed as in the previous paragraph and it would turn out that all elements of $\DL_{g+r''}$ are already in some previous data loci. So we may further assume that $\mathrm{rk}(f)=g+r''$. Then if $f+l^d$ has rank $g+r''+1$, for $l\in \langle l_1,\dots,l_{s}\rangle_\CC$, then $h=\sum_{i=1}^g l_i^d+l^d$ has rank $g+1$. Since $g$ is the generic rank in $S^d\CC[l_1,\dots,l_s]$, there exists a sequence $f_k=\sum_{i=1}^g L_{i,k}^d$, $L_{L_{i,k}}\in\langle l_1,\dots,l_{s}\rangle_\CC$ such that $f_k$ converges to $h$. This means $f_k+\sum_{i=1}^{r''} y_i^d\in \DL_{g+r''}$ converges to $f+l^d$, thus $f+l^d\in\DL_{g+r''}$, and so the procedure stabilizes.

\end{proof}
\begin{lemma}\label{prop: maxspaniso}
    The largest linear subspace spanned by orthogonal isotropic vectors in $\CC^n$ has dimension $\left\lfloor \frac{n}{2}\right\rfloor$.
\end{lemma}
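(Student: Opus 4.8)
The plan is to prove the two inequalities $\dim \leq \lfloor n/2 \rfloor$ and $\dim \geq \lfloor n/2 \rfloor$ separately, working with the complex bilinear extension of a fixed real inner product, which after a complex change of basis we may take to be the standard symmetric form $\langle x,y\rangle = \sum_{k} x_k y_k$ on $\CC^n$. First I would set up the linear algebra: let $W \subseteq \CC^n$ be a subspace spanned by pairwise orthogonal isotropic vectors $l_1,\dots,l_s$ (after discarding dependent ones we may assume them linearly independent, so $\dim W = s$). The key observation is that the bilinear form restricted to $W$ is identically zero: for $l = \sum \alpha_i l_i$ and $l' = \sum \beta_j l_j$ in $W$ we have $\langle l, l'\rangle = \sum_i \alpha_i \beta_i \langle l_i, l_i\rangle + \sum_{i\neq j}\alpha_i\beta_j\langle l_i,l_j\rangle = 0$ since each $\langle l_i,l_i\rangle = 0$ and each $\langle l_i,l_j\rangle=0$. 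So $W$ is a totally isotropic subspace of $(\CC^n, \langle\cdot,\cdot\rangle)$.

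For the upper bound, I would invoke the standard fact that a nondegenerate symmetric bilinear form on an $n$-dimensional space has maximal totally isotropic subspaces of dimension $\lfloor n/2\rfloor$ (the Witt index). Concretely: if $W$ is totally isotropic of dimension $s$, then $W \subseteq W^\perp$, and since the form is nondegenerate $\dim W^\perp = n - s$, forcing $s \leq n - s$, i.e. $s \leq \lfloor n/2\rfloor$. This gives $\dim W \leq \lfloor n/2\rfloor$ for any subspace spanned by orthogonal isotropic vectors.

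For the lower bound I need to actually exhibit $\lfloor n/2\rfloor$ pairwise orthogonal isotropic vectors spanning a space of that dimension — it is not enough to exhibit a totally isotropic subspace of that dimension, because the lemma asks specifically for a space \emph{spanned by orthogonal isotropic vectors}, and "totally isotropic" a priori only gives that every vector in it is isotropic, not that there is an orthogonal spanning set of isotropic vectors. This is the step I expect to be the main (only real) obstacle. The clean way around it: work in the standard form and, for $n = 2m$, take the vectors $l_k = e_{2k-1} + i\, e_{2k}$ for $k = 1,\dots,m$, where $e_1,\dots,e_n$ is the standard basis. Each $l_k$ is isotropic since $\langle l_k,l_k\rangle = 1 + i^2 = 0$, they are pairwise orthogonal since they involve disjoint pairs of basis vectors, and they are clearly linearly independent, so they span an $m = \lfloor n/2\rfloor$-dimensional space. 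For $n = 2m+1$ odd the same construction using the first $2m$ coordinates gives $m = \lfloor n/2\rfloor$ such vectors. Finally I would remark that the construction transports back to an arbitrary real inner product since any two nondegenerate symmetric bilinear forms over $\CC$ of the same dimension are equivalent, so the bound $\lfloor n/2\rfloor$ is independent of the chosen metric. Combining the two bounds closes the proof.
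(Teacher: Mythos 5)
Your proof is correct and takes essentially the same approach as the paper: the lower bound via the explicit vectors $e_{2k-1} + i\,e_{2k}$, and the upper bound via the observation that a span of pairwise orthogonal isotropic vectors is totally isotropic, so $W \subseteq W^\perp$ forces $\dim W \le n - \dim W$. Your packaging of the upper bound is a bit more direct than the paper's case-split proof by contradiction, but it is the same dimension count.
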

\begin{proof}
    First, we notice that we indeed can have $m=\left\lfloor \frac{n}{2}\right\rfloor$ linear independent and orthogonal isotropic vectors, for instance, one can consider  the vectors $w_1=(1,i,0\dots,0), w_2=(0,0,1,i,0,\dots,0),\dots,w_m=(0,\dots,0,1,i)$ or $w_m=(0,\dots,0,1,i,0)$ depending on the parity of $n$. 
    
    Let $v_1,\dots,v_m$ be linearly independent and pairwise orthogonal isotropic vectors. If $n$ is even, then $\langle v_1,\dots,v_m\rangle_\CC=\langle v_1,\dots,v_m\rangle_\CC^\perp$, since it is the intersection of $m$ independent hyperplanes. If there was $v\perp \langle v_1,\dots,v_m\rangle_\CC$ and $v_1,\dots,v_m,v$ linearly independent, then $\dim \langle v_1,\dots,v_m,v\rangle_\CC^\perp<m$, a contradiction since $\langle v_1,\dots,v_m\rangle_\CC$ is a subset of it.

    If $n$ is even, then $\langle v_1,\dots,v_m\rangle_\CC^\perp=\langle v_1,\dots,v_m,v\rangle_\CC$, with $v_1,\dots,v_m,v$ linearly independent. If $v$ was isotropic, then $\langle v_1,\dots,v_m,v\rangle_\CC^\perp=\langle v_1,\dots,v_m,v\rangle_\CC$ has dimension $m+1$, however since $v_1,\dots,v_m,v$ are linearly independent, then also the hyperplanes defined by $v_1^\perp,\dots,v_m^\perp,v^\perp$ are independent, thus their intersection has dimension $m$, a contradiction. So $v$ cannot be isotropic.
\end{proof}
\begin{proposition}\label{StepofStabilization}
    Let $m=\left\lfloor \frac{n}{2}\right\rfloor$. Denote $g_{s}$ the generic rank of $S^d\CC^s$, then $\DL_r$ stabilizes at $$
    r=\max\{g_{{s}}+n-2s | s\in\{0,\dots,m\}  \}.
    $$
\end{proposition}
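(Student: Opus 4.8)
The plan is to combine the stabilization argument of Theorem~\ref{thm: stab} with the bookkeeping provided by Lemma~\ref{prop: maxspaniso}, tracking two invariants attached to a generic weakly orthogonally decomposable tensor: the dimension $2s$ of the span of its isotropic terms (which, by Lemma~\ref{prop: maxspaniso}, can range over even values $0,2,\dots,2m$, with the orthogonal complement of that span being spanned by $n-2s$ pairwise-orthogonal non-isotropic vectors), and the number $r''$ of non-isotropic summands, which is at most $n-2s$ since those summands must be pairwise orthogonal non-isotropic vectors living in an $(n-2s)$-dimensional space. First I would recall, exactly as in the proof of Theorem~\ref{thm: stab}, that a generic $f\in\DL_r$ may be written $f=\sum_{i=1}^{r'}l_i^d+\sum_{j=1}^{r''}y_j^d$ with everything pairwise orthogonal, the $l_i$ isotropic spanning a space of some even dimension $2s$, the $y_j$ non-isotropic; and that the only moves that genuinely enlarge the chain are (a) enlarging the span of the non-isotropic part, (b) enlarging the span of the isotropic part, and (c) adding $l^d$ with $l$ in the existing isotropic span until $r'$ reaches the generic rank $g_s$ of $S^d\CC^s$ where here $s$ is the dimension of the span of the isotropic $l_i$'s — all of which are bounded.

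Next I would argue that the stabilized locus $\DL_N$ is the union, over $s\in\{0,\dots,m\}$, of the closures of the sets of tensors of the form $\sum_{i=1}^{g_s}l_i^d+\sum_{j=1}^{n-2s}y_j^d$, where $l_1,\dots,l_{g_s}$ lie in a fixed $2s$-dimensional isotropic-spanned subspace $W$ (so they contribute a generic-rank form of $S^dW\cong S^d\CC^{2s}$, but crucially the essential number of variables is only $s$, since an isotropic span of dimension $2s$ is spanned by $s$ linearly independent isotropic vectors — wait, no: by Lemma~\ref{prop: maxspaniso} the span has dimension equal to the number of independent orthogonal isotropic vectors, so $s$ independent isotropic vectors span an $s$-dimensional space, and the generic rank relevant to that part is $g_s$, the generic rank of $S^d\CC^s$), and the $y_j$ exhaust a maximal orthogonal non-isotropic complement of dimension $n-2s$. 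The maximal rank achievable within the $s$-th stratum is then $g_s$ (from the isotropic part, by the Comon–Carlini–type decoupling of \cite[Theorem~1.1]{CCC15}) plus $n-2s$ (from the non-isotropic part), and since $\DL_r$ stabilizes precisely once $r$ is large enough to contain every stratum, the stabilization index is $\max\{g_s+n-2s \mid s\in\{0,\dots,m\}\}$.

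The key steps, in order: (1) set up the two invariants $s$ and $r''$ for a generic element of $\DL_r$ and observe $r''\le n-2s$ and the isotropic span has dimension $2s\le 2m = 2\lfloor n/2\rfloor$; (2) re-run the three ``enlargement moves'' of Theorem~\ref{thm: stab} to see that, within a fixed value of $s$, the chain grows exactly until $r'$ reaches the generic rank of $S^d$ of the space of essential variables, which has dimension $s$, and then the rank contributed is $g_s$; (3) invoke \cite[Theorem~1.1 or Corollary~3.2]{CCC15} to conclude that for such a decoupled form the total rank is $g_s + r''$, maximized at $g_s + (n-2s)$; (4) take the union over $s\in\{0,\dots,m\}$ and identify $N$ as the maximum of these values; (5) check the two boundary cases $s=0$ (purely non-isotropic, giving $r = n = g_0 + n$, consistent since $g_0=0$) and $s=m$ (maximal isotropic span), and note the maximum need not occur at an endpoint, which is why the formula is phrased as a maximum over all $s$.

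The main obstacle I anticipate is step (2)–(3): justifying rigorously that a generic element of the stabilized $\DL_N$ in the $s$-th stratum really does have its isotropic part behaving like a \emph{generic} form in only $s$ essential variables (so that its rank is exactly $g_s$, not less), and simultaneously that one cannot ``trade'' a non-isotropic direction for more isotropic room to beat the bound $g_s+n-2s$. This requires carefully combining the essential-variables reduction with the fact (Lemma~\ref{prop: maxspaniso}) that once $2s=n$ there is no room left for any non-isotropic summand, so the two contributions $g_s$ and $n-2s$ genuinely compete; and it requires the closure-taking/limit argument at the end of the proof of Theorem~\ref{thm: stab} to guarantee that the generic-rank form in $s$ variables is actually attained as a limit inside $\DL_{g_s+r''}$, so that no further growth occurs beyond $r = \max_s (g_s+n-2s)$.
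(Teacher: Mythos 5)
Your proposal is correct and takes essentially the same route as the paper: track $s$ (the number of linearly independent isotropic $l_i$'s, whose span is an $s$-dimensional isotropic subspace $W$) and $t$ (the number of non-isotropic summands, bounded by $n-2s$ because $W\subset W^\perp$ leaves only $n-2s$ non-degenerate directions), invoke the proof of Theorem~\ref{thm: stab} for the fact that the chain stabilizes at $g_s+t$ in each such stratum, and maximize over $s\in\{0,\dots,m\}$. The recurring slip that the isotropic span has dimension $2s$ — it has dimension $s$, as you correct yourself mid-argument — does not propagate: the constraint $t\le n-2s$ and the final formula $\max_s\,(g_s+n-2s)$ match the paper's proof exactly.
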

\begin{proof}
  By the proof of Theorem \ref{thm: stab}, we know the stabilization of $f=\sum_{i=1}^{r'} l_i^d +\sum_{j=1}^{t} y_j^d$, with $l_i\in \langle l_1,\dots,l_s\rangle_\CC$ isotropic, $s\leq r'$, and $y_j$ non-isotropic, happens at the generic rank $g_{s}$ of $S^d\CC[l_1,\dots,l_s]\simeq S^d\CC^s$ plus $t$. We notice by acting as in the proof of Lemma \ref{prop: maxspaniso} that if $\langle l_1,\dots,l_s,y_1,\dots,y_t\rangle_\CC^\perp=\langle l_1,\dots,l_s\rangle_\CC$, then $t=n-2s$. This means the stabilization for this configuration happens for $r=g_{s}+n-2s$. Since $0\leq s\leq m$, the result follows.
\end{proof}

{
\begin{remark}  
The generic rank $g$ of $S^d\CC^n$ has been completely described by \cite{AH} and it is$$
g=\left\lceil \frac{\binom{n+d-1}{d}}{n}\right\rceil,
$$
except in the following cases \begin{itemize}
    \item $d=2$, where $g=n$.
    \item $3\leq n\leq 5$, $d=4$, where $g=\binom{n+1}{2}$.
    \item $(n,d)=(5,3)$, where $g=8$.
\end{itemize}
\end{remark}} 

\begin{table}[htbp]
    \centering
    \begin{tabular}{|c|c|c|c|c|c|c|c|c|c|c|c|c|c|c|c|c|c|c|}
\hline
\backslashbox{$n$}{$d$}&3&4&5&6&7&8&9&10&11&12&13&14&15\\ \hline
4&4&4&4&4&4&5&5&6&6&7&7&8&8\\
5&5&5&5&5&5&6&6&7&7&8&8&9&9\\
6&6&6&7&10&12&15&19&22&26&31&35&40&46\\
7&7&7&8&11&13&16&20&23&27&32&36&41&47\\
8&8&10&14&21&30&42&55&72&91&114&140&170&204\\
9&9&11&15&22&31&43&56&73&92&115&141&171&205\\
10&10&15&26&42&66&99&143&201&273&364&476&612&776\\\hline
    \end{tabular}
    \caption{Stabilization step of $\DL_r$ in $S^d\CC^n$}
    \label{tab:my_label}
\end{table}

\section{Examples of $\DL_N$ and conclusion}\label{Exa}

In this section, we will present several examples to show that $\DL_N$ can be sometimes the entire ambient space or it can be something smaller. We have seen in the previous section that $\DL_N$ can be equal to the weakly orthogonally decomposable tensors, now we will give an example of it being something bigger than that.
In our first example, we will see that $\DL_N$ can be the entire ambient space.
\begin{example}[Matrices]
It is classical that, for all matrices subtracting a rank-one approximation from the given matrix the result is of a lower rank. Let $M$ be the variety of $n\times n$ matrices and let $M^{\leq r}$ be the subvariety of matrices of rank less than or equal to $r$. Then $M^{\leq 1}$ is the variety of rank-one matrices and $M^{\leq n-1}$ is its dual. Now by \cite[Chapter $1$, Prop. $4.11$ and Lemma $4.12$]{GKZ} we know that the conormal variety of  $M^{\leq r}$ is the closure of the set of pairs
\[
\{(A,B)\in M\times M,\text{ s.t. }A\in M^{\leq r}\text{ and }B\in M^{\leq n-r}\}.
\]
So this way by Proposition~\ref{StructDL2}  we get that $\DL_2$ is the closure of
\[\{A+B\mid A\in M^{\leq 1}\text{ and }B\in \left(M^{\leq n-1}\cap M^{\leq 1}\right)\}=\{A+B| A\in M^{\leq 1}\text{ and }B\in M^{\leq 1}\}.\]
So $\DL_2=M^{\leq 2}$. Now we claim that $\DL_r=M^{\leq r}$. Indeed if by induction we suppose that $\DL_{r-1}=M^{\leq r-1}$, then $\DL_r$ is the closure of the $\pi_3$ projection of \[\mathcal{E}_{M^{n-1},M^{\leq 1}}\cap \left(M^{\leq r-1}\times M\times M\right).\]
We also have that $\mathcal{E}_{M^{\leq n-1},M^{\leq 1}}=\{(A,B,A+B)\mid A\in M^{n-1},B\in M^{\leq 1}\}\}$. So $\DL_r$ is the closure of the $\pi_3$ projection of
\[\{(A,B,A+B)\mid A\in M^{\leq r-1},B\in M^{\leq 1}\}\},\] and hence $\DL_r$ is the closure of $M^{\leq r-1}+M^{\leq 1}=M^{\leq r}$.
So finally we have that
\[\DL_1=M^{\leq 1}\subset \DL_2=M^{\leq 2}\subset\ldots\subset\DL_{n-1}=M^{\leq n-1}\subset \DL_{n}=M^{\leq n}=M.\]
\end{example}

In our next example, we will see that $\DL_N$ can be more than the orthogonally decomposable tensors.

\begin{example}[Regular $2\times 2\times 2$ tensors]\label{RegualEx}
Let $V=\mathbb{C}^2\otimes\mathbb{C}^2\otimes\mathbb{C}^2$. Let $X$ be the cone over the variety of regular rank-one tensors in $V$. This variety is defined by the $2\times 2$ minors of all the flattenings. Its dual $X^*$, Cayley's hyperdeterminant, is of codimension one, of degree four, and generated by the polynomial
\[y_4^2y_5^2-2y_3y_4y_5y_6+y_3^2y_6^2-\ldots-2y_1y_3y_6y_8-2y_1y_2y_7y_8+y_1^2y_8^2.\]
After running the computations for $\DL_2$ {(a Macaulay~$2$ code for computing data loci can be found in \cite[Example $4.2$]{HR22})} we find out that it is a codimenion $2$, degree $12$ variety with $3$ components. Observe that $\DL_2$ has as many components as $X^*_{sing}$ and the precise description of this singular locus can be found in \cite[Theorem $0.3$]{WZ}). The first component $\DL_{21}$ is generated by
\[\{-x_2x_5+x_1x_6-x_4x_7+x_3x_8,-x_2x_3+x_1x_4-x_6x_7+x_5x_8\},\]
the second component $\DL_{22}$ is generated by
\[\{-x_2x_5+x_1x_6-x_4x_7+x_3x_8, -x_3x_5-x_4x_6+x_1x_7+x_2x_8\}\]
and the third component $\DL_{23}$ is generated by
\[\{-x_2x_3+x_1x_4-x_6x_7+x_5x_8, -x_3x_5-x_4x_6+x_1x_7+x_2x_8\}.\]
So we can see that each component is generated by two out of the three generating polynomials for (weakly) orthogonally decomposable tensors moreover we have that (weakly) orthogonally decomposable tensors are exactly $\DL_{21}\cap\DL_{22}\cap\DL_{23}$.

Now if we continue to compute $\DL_3$ we get that it is of codimension $1$ and of degree $24$ with $3$ components. So we don't have yet stabilization of the $\DL_r's$.
Now, unfortunately, due to the lack of necessary computational power, we don't know what $\DL_4$ will be. But because the maximal rank in $V$ is $3$ and by \cite{SC10} we know that the limit $\DL_N\neq X^*$, we conjecture that $\DL_3=\DL_4$.
\end{example}
\begin{remark}
We have seen in the previous example that $\DL_2$ has as many components as $X^*_{sing}$. And that this number of components carries through for the rest of the $\DL_r$'s. This shows, in harmony with Remark~\ref{NodalSing}, that indeed there is a strong connection between our variety $\DL_N$ and the (nodal) singularities of the hyperdeterminant $X^*$. This research direction should be further persuaded.
\end{remark}
\begin{remark}
	It also remains open in the general case if and when the $\DL_r$'s stabilize to describe exactly after which step the $\DL_r$'s stabilize in the function of the given tensor format and in the function of the generic rank or the maximal rank. 	
\end{remark}

\noindent
{\bf Acknowledgements.} The authors are grateful to Jan Draisma for the stimulating discussions on the topic of this paper. We are also very thankful to all participants and organizers of the Oberwolfach Seminar on Metric Algebraic Geometry in May $2023$ who shared their ideas regarding this project. We would like to thank the anonymous referees for their valuable suggestions for improving this work.

Turatti (ORCID 0000-0003-4953-3994) has been supported by Troms{\o} Research Foundation grant agreement 17matteCR.

Horobe\c{t} was supported by the Project “Singularities and Applications” - CF 132/\ 31.07.2023 funded by the European Union - NextGenerationEU - through Romania’s National Recovery and Resilience Plan.

	\vspace{1cm}
\footnotesize {\bf Authors' address:}

\smallskip

\noindent Emil Horobe\c{t}, Sapientia Hungarian University of Transylvania \ 
\hfill {\tt horobetemil@ms.sapientia.ro}
\noindent Ettore Teixeira Turatti, UiT The Arctic University of Norway \ 
\hfill {\tt ettore.t.turatti@uit.no}
\end{document}